\definecolor{green1}{RGB}{0,107,28}
\definecolor{green2}{RGB}{17,85,35}
\definecolor{green3}{RGB}{0,77,20}
\definecolor{green4}{RGB}{33,166,68}
\definecolor{green5}{RGB}{60,166,88}
\definecolor{blue1}{RGB}{3,56,91}
\definecolor{blue2}{RGB}{16,51,73}
\definecolor{blue3}{RGB}{1,40,66}
\definecolor{blue4}{RGB}{35,109,157}
\definecolor{blue5}{RGB}{59,118,157}
\newtheorem{thm}{Theorem}[section]
\newtheorem{cor}[thm]{Corollary}
\newtheorem{lem}[thm]{Lemma}
\newtheorem{prop}[thm]{Proposition}
\newtheorem{examp}[thm]{Example}
\newtheorem{rema}[thm]{Remark}
\DeclareFontFamily{OT1}{pzc}{}
\DeclareFontShape{OT1}{pzc}{m}{it}{<-> s * [1.200] pzcmi7t}{}
\DeclareMathAlphabet{\mathpzc}{OT1}{pzc}{m}{it}
\def\mathcat{\mathpzc}
\def\xx{x}
\def\xxx{\partial}
\def\hh{s}
\def\hhh{h}
\def\Ho{\mathrm H}
\def\gg{\mathfrak g}
\def\HH{\mathrm H}
\def\TT{\mathrm T}
\def\CC{\mathbb C}
\def\AAA{\mathcal A}
\def\DDD{\mathcat D}
\def\MMM{\mathcal M}
\def\HHH{\mathcal H}
\def\NNN{N}
\def\MMM{M}
\def\dell{\mathcal D}
\def\ppi{\pi}
\def\AP{\mathcal P}
\def\AH{\mathcal H}
\def\Nsddata#1#2#3#4#5{
   (#4
     \begin{CD}
      \null @>#2>> \null\\[-3.2ex]
      \null @<<#3< \null
     \end{CD}
    #1, #5)
}
\long
\def\MSC#1\EndMSC{\def\arg{#1}\ifx\arg\empty\relax\else
      {\par\narrower\noindent
      2010 Mathematics Subject Classification. #1\par}\fi}
\long
\def\KEY#1\EndKEY{\def\arg{#1}\ifx\arg\empty\relax\else
    {\par\narrower\noindent
      Keywords and Phrases: #1\par}\fi}
\title 
{Pseudo Maurer-Cartan perturbation algebra and pseudo perturbation lemma}
\author{Johannes Huebschmann  }
\address{
\newline
Universit\'e de Lille - Sciences et Technologies 
\\
D\'epartement de Math\'ematiques\\
\newline CNRS-UMR 8524,
Labex CEMPI (ANR-11-LABX-0007-01)
\\
\newline
59655 Villeneuve d'Ascq Cedex, France\\
\newline
Johannes.Huebschmann@univ-lille.fr
 }
\date{\today}
\numberwithin{equation}{section}
\begin{document}
\setcounter{page}{1}

\maketitle
\medskip
\centerline
{To Nodar Berikashvili}
\medskip

\begin{abstract} We introduce the
pseudo Maurer-Cartan perturbation algebra, establish
a structural result 
and explore the structure of this algebra.
That structural result
entails, as a consequence, what we refer to as
the pseudo perturbation lemma.
This lemma, in turn, implies the
ordinary perturbation lemma.
\end{abstract}

\MSC 

\noindent
Primary: 
16E45 

\noindent
Secondary: 
17B55 
18G35 
18G50 
18G55 
55R20 
55U15 
\EndMSC

\KEY Berikashvili's functor $\DDD$, homological perturbation theory, 
deformation theory, abstract gauge theory, pseudo Maurer-Cartan
perturbation algebra, pseudo perturbation lemma

 \EndKEY
{\tableofcontents}

\section{Introduction} 

It is a pleasure to dedicate this paper to Nodar Berikashvili.
In \cite{MR1710565} I pointed out that there is an intimate
relationship between Berikashvili's functor $\DDD$ and
deformation theory. In particular,
cf. \cite[Section 5]{MR1710565},
there is a striking similarity between
Berikashvili's functor $\DDD$
and a 
functor written 
in the deformation theory literature
as $\mathrm{Def}_\gg$ for a differential graded Lie
algebra $\gg$.
Here I develop a small aspect of that relationship.
I introduce and explore the
{\em pseudo Maurer-Cartan perturbation algebra\/}. This algebra relates to
deformation theory in an obvious manner, and it so does
as well with regard  to 
Berikashvili's functor $\DDD$:
One can view the members of
the
pseudo Maurer-Cartan perturbation algebra
as operators on objects of the kind that lead to
Berikashvili's functor $\DDD$.

A recent result of Chuang and Lazarev \cite{chuanglazarev} 
shows that the ordinary perturbation lemma
is a consequence of a 
structural result for a  
differential graded bialgebra
that arises by abstracting from the
operators acting on what these authors refer to
as an {\em abstract Hodge decomposition\/}; see Section \ref{hpt}
below for the latter notion.
The underlying differential graded algebra results from extending
an observation
in \cite{MR1057939, MR1802006}.
I show here that a variant of the algebra 
in \cite{chuanglazarev},
the  pseudo Maurer-Cartan perturbation algebra,
leads to
the same kind of conclusion. 
Indeed,
a  
similar
structural result,
Theorem \ref{structural} below,
entails as well, as a consequence, the ordinary perturbation lemma.

The notion of abstract Hodge decomposition
is equivalent to that of contraction, 
a basic concept in homological perturbation theory.
A more general notion is this: A {\em pseudocontraction\/}
consists of a chain complex $\NNN$, 
a chain endomorphism $\tau \colon \NNN \to \NNN$,
and a homogeneous degree $1$ operator
$\hhh \colon \NNN \to \NNN$ such that $d \hhh + hd = \tau$ and
$\hhh^2 = 0$. 
Here
$\tau$ is not necessarily an idempotent
endomorphism
nor are the data subject to any annihilation property  (side condition)
beyond the vanishing of $\hhh^2$.
Abstracting from
the formal properties of the algebra of operators
acting on a pseudocontraction together with a perturbation
of the differential leads to
the
pseudo Maurer-Cartan perturbation algebra.
The pseudo Maurer-Cartan perturbation algebra
surjects non-trivially to
the corresponding algebra in \cite{chuanglazarev}
and hence recovers all the members of this algebra.
Thus the
pseudo Maurer-Cartan perturbation algebra
 yields all the relevant operators
that act on any chain complex arising from 
an abstract Hodge decomposition with a perturbation of the differential
or, equivalently, from
a contraction with a perturbation of the differential.
Theorem \ref{structural} below says that
a 
structural result 
which Chuang and Lazarev show to be valid
for the algebra they consider
still  holds formally
for the pseudo Maurer-Cartan perturbation algebra.
The structure of the
pseudo Maurer-Cartan perturbation algebra
is somewhat simpler than that of 
the corresponding algebra in \cite{chuanglazarev}:
There is no annihilation contraint
beyond the vanishing of the square of $\hhh$,
and $\tau$ is not necessarily an idempotent, which
is equivalent to the axiom $\pi \nabla = \mathrm{Id}$
imposed on a contraction
$  (\MMM
     \begin{CD}
      \null @>{\nabla}>> \null\\[-3.2ex]
      \null @<<{\pi}< \null
     \end{CD}
    \NNN, h) 
$;
see Section \ref{hpt} below.
The present terminology 
\lq\lq pseudo Maurer-Cartan perturbation algebra\rq\rq\ 
avoids confusion with the notions of {\em Maurer-Cartan algebra\/}
\cite{MR1425752} and of 
{\em multi derivation {M}aurer--{C}artan algebra\/}
\cite{MR3584886}.
A consequence of Theorem \ref{structural}
is the 
{\em pseudo perturbation lemma\/}.
Corollary \ref{pseudolem} 
and Corollary \ref{pseudolem2} below
spell out two versions thereof.
The pseudo perturbation lemma implies
the ordinary perturbation lemma, see
Section \ref{hpt} below.
The results of this paper 
admit extensions, not made precise here,
 relative to additional algebraic structure
like algebra or coalgebra structures, similar to such generalizations
in \cite{MR1109665}.

In \cite{MR3881491} I explained another small aspect  of the relationship
between 
Berikashvili's functor $\DDD$
and the functor $\mathrm{Def}_\gg$ for a differential graded Lie
algebra $\gg$.
Also, working out the connections with
\cite{schlstas, MR517083} would be an exceedingly attractive project.

\section{Preliminaries}
The ground ring $R$ is a
commutative ring with unit.
Henceforth \lq\lq chain complex\rq\rq, \lq\lq algebra\rq\rq\ etc.
means 
$R$-chain complex, $R$-algebra, etc.
As in classical differential homological algebra, cf., e.g., 
\cite{MR0365571},
we denote the identity morphism on an object by the same
symbol as the object.

\section{Pseudo perturbation algebra}

Let  $\AH$ be the differential graded algebra
generated by $\hh$ and $\tau$  
of degrees $1$ and zero, respectively,
with differential (lowering degree by $-1$) written as $D$,
subject to
\begin{align}
D\hh&= \tau,
\label{pc1}
\\
\hh^2 &= 0.
\label{pc2}
\end{align}
We refer to $\AH$ as the 
{\em pseudocontraction algebra\/}.

\begin{prop}
The algebra generators $\tau$ and $\hh$ of $\AH$ commute. Hence
the graded algebra that underlies $\AH$ decomposes as $\Lambda[\hh] \otimes R[\tau]$.
\end{prop}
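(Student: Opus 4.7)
The plan is to extract the commutation relation $\tau \hh = \hh \tau$ directly from the two defining relations by applying the differential $D$ to $\hh^2 = 0$, and then argue that no additional relations hold so that the underlying graded algebra must be the announced tensor product.

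For the first step, I would invoke the graded Leibniz rule for the derivation $D$ on $\AH$. Since $\hh$ has degree $1$, we obtain
\[
D(\hh^2) \;=\; (D\hh)\hh + (-1)^{|\hh|}\hh(D\hh) \;=\; \tau \hh - \hh \tau,
\]
where I have used the relation \eqref{pc1}. The relation \eqref{pc2} forces the left-hand side to vanish, so $\tau \hh = \hh \tau$ as claimed. This is the only real calculation needed.

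For the second step, I would give a universal property argument. Consider the graded algebra $\Lambda[\hh] \otimes R[\tau]$ where $\hh$ has degree $1$ and $\tau$ has degree $0$; the generators commute (in the graded sense this is automatic for even times odd) and $\hh^2 = 0$ by construction, so this is the free commutative graded $R$-algebra on one odd generator $\hh$ and one even generator $\tau$. Equipping it with the derivation determined by $D\hh = \tau$ and $D\tau = 0$ yields a differential graded algebra (one easily verifies $D^2 = 0$ on generators and extends as a graded derivation) that satisfies \eqref{pc1} and \eqref{pc2}. By the universal property of $\AH$, there is a surjective DG algebra homomorphism $\AH \to \Lambda[\hh] \otimes R[\tau]$. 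Conversely, the commutation relation established in the first step shows that $\Lambda[\hh] \otimes R[\tau]$ surjects onto the graded algebra underlying $\AH$, and the two maps are mutually inverse on the generators, hence everywhere.

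I do not expect any genuine obstacle here; the only thing to be careful about is the sign in the Leibniz rule, which must give $\tau\hh - \hh\tau$ rather than $\tau\hh + \hh\tau$, since $\hh$ has odd degree. That sign is precisely what turns the vanishing of $\hh^2$ into a commutation (rather than an anticommutation) between $\tau$ and $\hh$.
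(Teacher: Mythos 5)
Your proof is correct and follows the paper's own argument exactly: the paper's entire proof is the single computation $0 = D\hh^2 = \tau\hh - \hh\tau$ via the graded Leibniz rule, which is precisely your first step with the same sign attention. Your second step merely makes explicit the universal-property verification of the decomposition $\Lambda[\hh]\otimes R[\tau]$ that the paper compresses into the word \lq\lq Hence\rq\rq, and it is carried out correctly.
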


\begin{proof}
\begin{equation*}
0 = D\hh^2 = \tau \hh - \hh \tau. \qedhere 
\end{equation*}
\end{proof}

Next, let $\AP$ be the differential graded algebra 
having a single generator ${\xx}$ of degree $-1$, subject to
\begin{align}
D{\xx} + {\xx}^2 &=0 .
\end{align}
The canonical isomorphisms
$\varepsilon \colon \AH_0 \to R$ and
$\varepsilon \colon \AP_0 \to R$
turn $\AH$ and $\AP$ into augmented differential graded algebras.
Let $\AAA$ denote
the augmented free product differential graded algebra $\AP * \AH$, cf.
\cite{MR0224678}.
We refer to $\AAA=\AP * \AH$ as the 
{\em pseudo perturbation algebra\/}.

Here is an explicit description of that free product:
For two chain complexes $U$ and $V$, let $\TT^n(U,V)$ denote the
chain complex which arises as an $n$-fold tensor product
by alternatingly juxtaposing $U$ and $V$, starting 
with $U$, that is,
\begin{equation}
\TT^n(U,V) = U \otimes V \otimes ... \quad (n \ \text{factors}).
\end{equation}
We use the notation $I$ for the augmentation ideal functor.
As a chain complex, the pseudo perturbation algebra
$\AAA=\AP * \AH$ decomposes as
\begin{equation}
\begin{aligned}
\AP * \AH &= R \oplus \bigoplus_{n \geq 1} \TT^n(I\AP, I\AH)
\oplus \bigoplus_{n \geq 1} \TT^n(I\AH, I\AP)
\\
&=
R \oplus I\AP \oplus I\AH
\oplus \bigoplus_{n \geq 2} \TT^n(I\AP, I\AH)
\oplus \bigoplus_{n \geq 2} \TT^n(I\AH, I\AP) ,
\end{aligned}
\end{equation}
cf.
\cite{MR0224678}.

\section{Pseudo Maurer-Cartan perturbation algebra}
Let  $u =\hh {\xx}$ and  $v ={\xx}\hh$.
We also use the notation $t = 1- \tau$.
The pseudo perturbation algebra 
$\AAA=\AP * \AH$ has  as well ${\xx}$, $\hh$, and $t$ as algebra
generators.
Let $\widehat \AAA$ denote the graded $R$-algebra 
that arises by formally inverting the members $1+u = 1+\hh {\xx}$ and 
$1+v = 1+{\xx}\hh$ of $\AAA_0$.
The differential $D$ of $\AAA$ extends to a differential
on $\widehat \AAA$; we maintain the notation $D$ for this differential.
We refer to $\widehat \AAA$ as the 
{\em pseudo Maurer-Cartan perturbation algebra\/}.

Inspection shows that
\begin{align}
(1+{\xx} \hh)^{-1}&=1-{\xx}(1+ \hh {\xx})^{-1} \hh
\label{insp1}
\\
(1+ \hh {\xx} )^{-1}&=1- \hh (1+ {\xx} \hh)^{-1}{\xx},
\label{insp2}
\end{align}
cf. \cite[Remark 2.4]{MR3276839}.
Below we use the notation 
\begin{equation}
\alpha = (1+u)^{-1} = (1+\hh {\xx})^{-1}\in \widehat \AAA_0,\ 
\beta = (1+v)^{-1} = (1+{\xx}\hh)^{-1} \in \widehat \AAA_0.
\end{equation}
In terms of this notation, \eqref{insp1} and \eqref{insp2}
take the form
\begin{align}
\beta + {\xx} \alpha \hh &=1
\label{insp3}
\\
\alpha + \hh \beta {\xx} &=1.
\label{insp4}
\end{align}

\begin{prop}
Setting 
\begin{equation}
\phi({\xx})=-{\xx},\ \phi(\hh) = \alpha s = s\beta,\ \phi(t) =\alpha t \beta,
\end{equation}
yields
an involution $\phi \colon \widehat \AAA \to \widehat \AAA$
of the graded $R$-algebra $\widehat \AAA$ such that
\begin{equation}
\phi(\alpha)= \alpha^{-1},\ \phi(\beta)= \beta^{-1}.
\end{equation}
\end{prop}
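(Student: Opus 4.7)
The plan is to define $\phi$ first on the algebra generators of the free product $\AAA = \AP * \AH$, verify that it preserves the defining graded-algebra relations, and then extend to $\widehat\AAA$ by the universal property of localization. Several auxiliary identities are needed repeatedly, all consequences of $\hh^2 = 0$ together with the defining formulas for $\alpha$ and $\beta$: geometric-series expansion gives $\hh\alpha = \hh$ and $\beta\hh = \hh$; the factoring $(1+\hh\xx)\hh = \hh(1+\xx\hh)$ yields $\alpha\hh = \hh\beta$; and \eqref{insp3}--\eqref{insp4} supply $\alpha\hh\xx = 1 - \alpha$ and $\xx\hh\beta = 1 - \beta$. Together these produce the crucial absorption $\hh\alpha\hh = \hh\cdot\hh = 0$.

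The first step is to check that $\phi$ respects the only relations holding in the graded algebra underlying $\AAA$, namely $\hh^2 = 0$ (imposed) and $\tau\hh = \hh\tau$ (supplied by the preceding proposition). The first is immediate: $\phi(\hh)^2 = \alpha\hh\alpha\hh = \alpha(\hh\alpha)\hh = \alpha\hh^2 = 0$. For the commutation, expand
\[
\phi(\tau)\phi(\hh) = \alpha\hh - \alpha t\beta\alpha\hh,\qquad \phi(\hh)\phi(\tau) = \alpha\hh - \alpha\hh\alpha t\beta;
\]
applying $\hh\alpha = \hh$, $t\hh = \hh t$ and $\hh\beta = \alpha\hh$ in sequence reduces the second term of the right-hand expression to $\alpha t\alpha\hh$, so the discrepancy between the two products is $\alpha t(\beta - 1)\alpha\hh = -\alpha t\xx\hh\beta\alpha\hh$. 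Using $\hh\beta = \alpha\hh$ once more, $\hh\beta\alpha\hh = \alpha(\hh\alpha)\hh = 0$, so both products coincide.

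The second step is to compute $\phi(1+\hh\xx) = 1 - \alpha\hh\xx = \alpha$ and $\phi(1+\xx\hh) = 1 - \xx\alpha\hh = 1 - \xx\hh\beta = \beta$. Since $\alpha,\beta \in \widehat\AAA$ are already units, the universal property of localization extends $\phi$ uniquely to a graded algebra endomorphism of $\widehat\AAA$, and these same computations yield simultaneously $\phi(\alpha) = \alpha^{-1}$ and $\phi(\beta) = \beta^{-1}$. Involutivity is then checked on generators: $\phi^2(\xx) = \xx$ is trivial, $\phi^2(\hh) = \phi(\alpha)\phi(\hh) = \alpha^{-1}\alpha\hh = \hh$, and $\phi^2(t) = \phi(\alpha)\phi(t)\phi(\beta) = \alpha^{-1}(\alpha t\beta)\beta^{-1} = t$; since $\phi^2$ is an algebra endomorphism agreeing with the identity on algebra generators, $\phi^2 = \mathrm{Id}$ throughout $\widehat\AAA$.

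The only genuinely delicate point is the commutation check in the first step; everything there ultimately reduces to the single absorption $\hh\alpha\hh = 0$, which is $\hh^2 = 0$ in disguise.
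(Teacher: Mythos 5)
Your proof is correct. There is, in fact, nothing in the paper to compare it against: the author states this proposition without any proof (the text passes directly to the definition of $D^\phi=\phi D\phi$), so your verification supplies exactly what is omitted. Your route is the natural one and it is complete: since the graded algebra underlying $\AAA=\AP*\AH$ is the free product of $R[\xx]$ and $\Lambda[\hh]\otimes R[\tau]$ (the latter by the paper's first proposition), the only relations to check are $\hh^2=0$ and $\tau\hh=\hh\tau$; the computations $\phi(1+\hh\xx)=\alpha$ and $\phi(1+\xx\hh)=\beta$ then both trigger the universal property of the localization and yield $\phi(\alpha)=\alpha^{-1}$, $\phi(\beta)=\beta^{-1}$; and checking $\phi^2=\mathrm{Id}$ on $\xx$, $\hh$, $t$ suffices because $\phi^2$ fixes $\AAA$ and hence, by uniqueness of inverses, fixes $\alpha$ and $\beta$ as well. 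You also correctly treat $\phi$ purely as a map of graded algebras, not of complexes — compatibility with $D$ is precisely what fails, and is the content of the structural theorem. Two remarks. First, your argument independently establishes $\alpha\hh=\hh\beta$ from the factorization $(1+\hh\xx)\hh=\hh(1+\xx\hh)$; in the paper this identity is \eqref{comm2}, proved only \emph{after} the proposition and by a formal series manipulation, so your derivation is both logically cleaner (the equality $\alpha\hh=\hh\beta$ is implicitly asserted in the very statement you are proving) and rigorous in the formally localized algebra. Second, and in the same spirit, the geometric-series justification of $\hh\alpha=\hh$ and $\beta\hh=\hh$ is heuristic in $\widehat\AAA$, where the inverses are formal rather than given by convergent series; but the identities follow in one line without series, from $\hh(1+\hh\xx)=\hh+\hh^2\xx=\hh$ and $(1+\xx\hh)\hh=\hh+\xx\hh^2=\hh$ upon right, respectively left, multiplication by $\alpha$, respectively $\beta$ — you should phrase them this way. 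With that cosmetic repair, the delicate commutation check, which as you say reduces everything to the absorption $\hh\alpha\hh=\alpha\hh\hh=0$ wait, more precisely to $\hh\alpha\hh=(\hh\alpha)\hh=\hh^2=0$, is sound, and the proof stands as a complete substitute for the omitted one.
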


Under the involution $\phi$ of $\widehat \AAA$, the algebra
differential $D$ passes
to the algebra differential $D^\phi= \phi D \phi$ on $\widehat \AAA$.

\begin{lem}
\begin{align}
D\alpha &=  - \alpha (\tau {\xx} +\hh {\xx}^2)\alpha
\label{dif1}
\\
D\beta &=   \beta ({\xx} \tau +{\xx}^2 \hh)\beta 
\label{dif2}
\\
{\xx} \alpha &=\beta {\xx} 
\label{comm}
\\
\alpha \hh &= \hh \beta  .
\label{comm2}
\end{align}
\end{lem}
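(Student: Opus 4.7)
The plan is to verify the four identities in essentially two steps: first dispose of the commutation relations \eqref{comm} and \eqref{comm2}, which are purely algebraic and do not involve $D$; then deduce the formulas for $D\alpha$ and $D\beta$ by differentiating $(1+\hh\xx)\alpha=1$ and $\beta(1+\xx\hh)=1$.

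For \eqref{comm} I would simply observe that in the ungraded ambient algebra one has
\[
\xx(1+\hh\xx) = \xx + \xx\hh\xx = (1+\xx\hh)\xx,
\]
and multiply by $\alpha = (1+\hh\xx)^{-1}$ on the right and $\beta = (1+\xx\hh)^{-1}$ on the left. The same trick,
\[
(1+\hh\xx)\hh = \hh + \hh\xx\hh = \hh(1+\xx\hh),
\]
yields \eqref{comm2}. Here one should emphasise that these manipulations take place in the degree-$0$ part of $\widehat\AAA$, where the elements in question live, so no Koszul signs intervene.

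For \eqref{dif1} I would first compute $D(1+\hh\xx)$ using the Leibniz rule in $\widehat\AAA$, remembering that $|\hh|=1$ and that $D\hh=\tau$, $D\xx=-\xx^2$ by \eqref{pc1} and the defining relation of $\AP$. This gives
\[
D(1+\hh\xx) = (D\hh)\xx - \hh(D\xx) = \tau\xx + \hh\xx^2.
\]
Differentiating the identity $(1+\hh\xx)\alpha=1$ and using that $1+\hh\xx$ has degree $0$ then produces
\[
0 = (\tau\xx+\hh\xx^2)\alpha + (1+\hh\xx)D\alpha,
\]
whence multiplying by $\alpha$ on the left yields \eqref{dif1}. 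The argument for \eqref{dif2} is entirely parallel: compute
\[
D(1+\xx\hh) = (D\xx)\hh + (-1)^{|\xx|}\xx(D\hh) = -\xx^2\hh - \xx\tau,
\]
differentiate $\beta(1+\xx\hh)=1$, and solve for $D\beta$.

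The only subtle point, and the one place a reader may stumble, is the careful bookkeeping of Koszul signs when applying Leibniz to $\hh\xx$ and $\xx\hh$; since $\hh$ has odd degree one picks up a minus sign in the second term in each case, and these are precisely what is needed to produce the asymmetry between $\tau\xx+\hh\xx^2$ in \eqref{dif1} and $\xx\tau+\xx^2\hh$ in \eqref{dif2}. Once the signs are handled correctly, all four identities follow by direct computation.
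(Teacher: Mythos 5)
Your proof is correct. For the differential identities \eqref{dif1} and \eqref{dif2} it is essentially the paper's argument: the paper differentiates $\alpha\alpha^{-1}=1$ and $\beta\beta^{-1}=1$ to obtain $D\alpha=-\alpha\,(D(1+\hh\xx))\,\alpha$ and $D\beta=-\beta\,(D(1+\xx\hh))\,\beta$, which is the same Leibniz computation you carry out on $(1+\hh\xx)\alpha=1$ and $\beta(1+\xx\hh)=1$, with identical sign bookkeeping ($D(\hh\xx)=\tau\xx+\hh\xx^2$ because $|\hh|=1$, and $D(\xx\hh)=-\xx^2\hh-\xx\tau$ because $|\xx|=-1$). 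Where you genuinely diverge is on \eqref{comm} and \eqref{comm2}: the paper verifies $\xx\alpha=\beta\xx$ by expanding both sides as formal geometric series $\xx-\xx\hh\xx+\xx\hh\xx\hh\xx-\cdots$ and cancelling, whereas you multiply the finite intertwining identities $\xx(1+\hh\xx)=(1+\xx\hh)\xx$ and $(1+\hh\xx)\hh=\hh(1+\xx\hh)$ by $\alpha$ and $\beta$. Your route buys something real: $\widehat\AAA$ is obtained by \emph{formally inverting} $1+u$ and $1+v$ (a localization, not a completion), so the infinite series in the paper's display have no literal meaning in $\widehat\AAA$ and are best read as shorthand for exactly the finite identity you use; your argument is the rigorous version, and it also establishes \eqref{comm2} explicitly, which the paper leaves tacit. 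One small correction to your commentary: the manipulations do not take place in the degree-$0$ part of $\widehat\AAA$ (the elements $\xx$ and $\hh$ have degrees $-1$ and $1$, and $\xx\alpha$, $\hh\beta$ are not of degree $0$); the correct justification for the absence of Koszul signs is simply that multiplication in a graded algebra carries no signs --- signs enter only through the Leibniz rule for $D$, where you do handle them correctly.
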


\begin{proof}
The identities $0 = D(\alpha \alpha^{-1})$ 
and  $0 = D(\beta \beta^{-1})$ 
entail
\begin{align*}
D\alpha &= - \alpha (D\alpha^{-1})\alpha  = - \alpha (D(1+ \hh {\xx}))\alpha
 = - \alpha (\tau {\xx} +\hh {\xx}^2)\alpha
\\
D\beta &= - \beta (D\beta^{-1})\beta  = - \beta (D(1+ {\xx}\hh))\beta
 =  \beta ({\xx} \tau +{\xx}^2 \hh)\beta .
\end{align*}
Further,
\begin{equation*}
{\xx} \alpha - \beta {\xx} ={\xx} - {\xx} \hh {\xx} + {\xx} \hh {\xx} \hh {\xx} - \ldots 
-({\xx} - {\xx} \hh {\xx} + {\xx} \hh {\xx} \hh {\xx} - \ldots )
=0. \qedhere
\end{equation*}

\end{proof}

On $\AAA$,
the member ${\xx}$ of
$\AAA$
induces, in the standard manner, a twisted (or perturbed) differential
$D^{\xx}$. We recall that $D^{\xx}(a) = Da + [{\xx},a]$ ($a \in \AAA$).
This differential turns 
$\AAA$  into 
a differential graded algebra as well, and the twisted differential plainly
extends to $\widehat \AAA$.
We denote the perturbed differential graded algebras by 
$\AAA^{\xx}$ and $\widehat \AAA^{\xx}$.

\begin{thm}
\label{structural}
The algebra differential $D^\phi$ on 
$\widehat \AAA$ coincides with the twisted differential $D^{\xx}$.
\end{thm}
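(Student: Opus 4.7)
The plan is to verify the equality of two degree $-1$ operators on $\widehat\AAA$ by checking it on algebra generators. Both $D^\phi = \phi D \phi$ and $D^{\xx} = D + [{\xx},\,\cdot\,]$ are derivations of the graded algebra $\widehat\AAA$: the former because $D$ is a derivation and $\phi$ is a graded algebra automorphism, the latter as a sum of two derivations. Since $\widehat\AAA$ is generated, as a graded algebra, by ${\xx}$, $\hh$ and $t$ (with $\alpha$ and $\beta$ then determined by these), it suffices to verify $D^\phi = D^{\xx}$ on these three elements. The generator ${\xx}$ is immediate: using $D{\xx} = -{\xx}^2$ and $[{\xx},{\xx}] = 2{\xx}^2$, both sides equal ${\xx}^2$.

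For $\hh$, the target value is $D^{\xx}(\hh) = \tau + {\xx}\hh + \hh{\xx}$. To compute $D^\phi(\hh) = \phi D(\hh\beta)$, I would expand $D(\hh\beta)$ by the graded Leibniz rule together with \eqref{dif2}, and then push $\phi$ through term by term using $\phi(\hh\beta) = \hh$, $\phi(\alpha) = \alpha^{-1} = 1+\hh{\xx}$, $\phi(\beta) = \beta^{-1} = 1+{\xx}\hh$, and $\phi(\tau) = 1 - \alpha t\beta$. The commutation identities \eqref{comm} and \eqref{comm2} reshuffle the resulting expression into the shape $\alpha^{-1}\beta^{-1} - (\alpha + \hh\beta{\xx})t - \hh{\xx}^2\hh$. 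Expanding $\alpha^{-1}\beta^{-1} = 1 + {\xx}\hh + \hh{\xx} + \hh{\xx}^2\hh$ and applying \eqref{insp4} in the form $\alpha + \hh\beta{\xx} = 1$ produces exactly $\tau + {\xx}\hh + \hh{\xx}$, as required.

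The computation on $\tau$ is analogous. On one side, $D^{\xx}(\tau) = [{\xx},\tau] = t{\xx} - {\xx}t$. On the other, I would expand $D^\phi(\tau) = -\phi D(\alpha t\beta)$ using Leibniz together with \eqref{dif1}--\eqref{dif2} and the fact that $Dt = -D\tau = 0$, and then simplify via $\phi(\alpha t\beta) = t$ together with the expansions of $\alpha^{-1}$ and $\beta^{-1}$. After collecting terms this yields $t{\xx} - {\xx}t + t\beta{\xx}t - t{\xx}\alpha t$, and the extra pair cancels directly from \eqref{comm} rewritten as $\beta{\xx} = {\xx}\alpha$. The main obstacle is bookkeeping rather than conceptual: one must track signs carefully in the graded Leibniz rule and graded commutator, given the parities $|{\xx}| = -1$ and $|\hh| = 1$, and keep the expansions organized enough to recognize that the identities \eqref{comm}, \eqref{comm2}, and \eqref{insp4} from the preceding lemma supply precisely the cancellations needed for every extra $t$-dependent term to disappear.
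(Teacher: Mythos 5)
Your proposal is correct and takes essentially the same approach as the paper: both verify $D^\phi=D^{\xx}$ on the generators ${\xx}$, $\hh$, and $t$ (you use $\tau$, equivalently) by direct computation with the involution formulas and the identities \eqref{dif1}--\eqref{comm2}, \eqref{insp3}--\eqref{insp4}, and your intermediate expressions check out --- $\alpha^{-1}\beta^{-1}-(\alpha+\hh\beta{\xx})t-\hh{\xx}^2\hh$ reduces to $\tau+{\xx}\hh+\hh{\xx}$ exactly as claimed, and $t{\xx}-{\xx}t+t\beta{\xx}t-t{\xx}\alpha t$ is precisely the negative of the paper's expansion of $D^\phi(t)$. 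The only difference is presentational: you state explicitly the reduction to generators (both operators being degree $-1$ derivations, with their values on $\alpha$ and $\beta$ forced by those on $1+\hh{\xx}$ and $1+{\xx}\hh$), a step the paper leaves implicit.
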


\begin{proof} Using $Dt=0$, $\beta \phi(\beta) =1$, $\phi(\alpha) \alpha = 1$,
$ x \alpha = \beta x$, $\alpha = (1+\hh x)^{-1}$,
$\phi(\alpha) = \alpha^{-1}=1+\hh x$,
$\beta = (1+x \hh)^{-1}$,
and $\phi(\beta) = \beta^{-1}=1+ x\hh$,
we find:
\begin{align*}
D^\phi (\hh)&=\phi (D(\alpha \hh))
\\
&
=\phi (D(\alpha) \hh + \alpha D \hh)
\\
&=\phi (D(\alpha)) \phi(\hh) + \phi( \alpha \tau)
\\
&=\phi (D(\alpha)) \alpha \hh + \phi( \alpha) \phi(1-t)
\\
&=\phi (D(\alpha)) \alpha \hh + \alpha^{-1}(1- \alpha t \beta)
\\
&=\phi (D(\alpha)) \alpha \hh + \alpha^{-1}-  t \beta
\\
&=\phi (  - \alpha (\tau {\xx} +\hh {\xx}^2)\alpha ) \alpha \hh + \alpha^{-1}-  t \beta
\\
&=-\phi (\alpha) (\phi(\tau) \phi({\xx}) +\phi(\hh) \phi({\xx}^2))\phi(\alpha ) 
\alpha \hh 
+ \alpha^{-1}-  t \beta
\\
&=-\phi (\alpha) (\phi(1-t) (-{\xx}) +\alpha \hh {\xx}^2)  \hh 
+ \phi(\alpha)-  t \beta
\\
&=\phi (\alpha) (\phi(1-t) {\xx})\hh 
-\phi (\alpha)
\alpha \hh {\xx}^2  \hh 
+ \phi(\alpha)-  t \beta
\\
&=\phi (\alpha) (1- \phi (t)) {\xx}\hh 
- \hh {\xx}^2  \hh 
+ \phi(\alpha)-  t \beta
\\
&=\phi (\alpha) {\xx}\hh 
-
\phi (\alpha) \phi (t) {\xx}\hh 
- \hh {\xx}^2  \hh 
+ \phi(\alpha)-  t \beta
\\
&=\phi (\alpha) {\xx}\hh 
-
\phi (\alpha) \alpha t \beta {\xx}\hh 
- \hh {\xx}^2  \hh 
+ \phi(\alpha)-  t \beta
\\
&=\phi (\alpha) {\xx}\hh 
-
 t \beta {\xx}\hh 
- \hh {\xx}^2  \hh 
+ \phi(\alpha)-  t \beta
\\
&=(1+\hh {\xx}) {\xx}\hh 
-
 t \beta {\xx}\hh 
- \hh {\xx}^2  \hh 
+ 1+\hh {\xx} -  t \beta
\\
&= {\xx}\hh 
-
 t \beta {\xx}\hh 
+ 1+\hh {\xx} -  t \beta
\\
&= 1 + [{\xx},\hh] 
-
t \beta {\xx}\hh 
 -  t \beta
\\
&= 1 + [{\xx},\hh] 
-
t  {\xx}\hh \beta
 -  t \beta
\\
&= 1 + [{\xx},\hh] 
-
t (1+ {\xx}\hh) \beta
\\
&= 1 + [{\xx},\hh] 
-
t (1+ {\xx}\hh) (1+ {\xx}\hh)^{-1}
\\
&= 1 -t + [{\xx},\hh] 
\\&
= \tau + [{\xx},\hh]
\\&
= D^{\xx} (\hh) 
\end{align*}
Likewise
\begin{align*}
D^\phi ({\xx})&= \phi (D(-{\xx})) 
\\
&= \phi({\xx}^2) = {\xx}^2
\\
D^{\xx} ({\xx})&= D{\xx} +[{\xx},{\xx}] 
\\
&= -{\xx}^2 + 2 {\xx}^2 = {\xx}^2
\end{align*}
Finally,
\begin{align*}
D^\phi(t) &=\phi(D(\phi (t)))
=
\phi(D(\alpha t \beta))
\\
&=
\phi(D(\alpha) t \beta)
+\phi(\alpha  D(t) \beta)
+
\phi(\alpha  t  D(\beta))
\\
&=
\phi(D(\alpha)) \phi(t) \phi(\beta)
+\phi(\alpha)  \phi(D(t)) \phi(\beta)
+
\phi(\alpha)  \phi(t)  \phi(D(\beta))
\\
&=
\phi(D(\alpha)) \phi(t) \phi(\beta)
+
\phi(\alpha)  \phi(t)  \phi(D(\beta))
\\
&=
\phi(- \alpha (\tau {\xx} +\hh {\xx}^2)\alpha) \alpha t \beta \phi(\beta)
+
\phi(\alpha)  \alpha t \beta  \phi( \beta ({\xx}\tau  +{\xx}^2 \hh)\beta))
\\
&=
\phi(- \alpha (\tau {\xx} +\hh {\xx}^2)\alpha) \alpha t 
+
\phi(\alpha)  \alpha t  \phi(({\xx}\tau  +{\xx}^2 \hh)\beta)
\\
&=
\phi(- \alpha (\tau {\xx} +\hh {\xx}^2)) t 
+
 t  \phi(({\xx}\tau  +{\xx}^2 \hh)\beta)
\\
&=
-\phi(\alpha \tau {\xx}) t 
-
\phi(\alpha \hh {\xx}^2) t 
+
 t  \phi({\xx}\tau\beta)
+
 t  \phi({\xx}^2 \hh\beta)
\\
&=
-\phi(\alpha) \phi(\tau) \phi({\xx}) t 
-
\phi(\alpha) \phi(\hh) \phi({\xx})^2t 
+
 t  \phi({\xx})\phi(\tau)\phi(\beta)
+
 t  \phi({\xx})^2 \phi(\hh) \phi(\beta)
\\
&=
\phi(\alpha) (1-\phi(t)) {\xx} t 
-
\phi(\alpha) \alpha \hh {\xx}^2t 
- t  {\xx} (1-\phi(t))\phi(\beta)
+
 t  {\xx}^2 \hh \beta \phi(\beta)
\\
&=
\phi(\alpha)  {\xx} t 
-
\phi(\alpha) \phi(t) {\xx} t 
-
\hh {\xx}^2t 
- t  {\xx} \phi(\beta)
+ t  {\xx} \phi(t)\phi(\beta)
+
 t  {\xx}^2 \hh 
\\
&=
(1+\hh {\xx})  {\xx} t 
-
\phi(\alpha) \alpha t \beta {\xx} t 
-
\hh {\xx}^2t 
- t  {\xx} (1+ {\xx}\hh )
+ t  {\xx} \alpha t \beta\phi(\beta)
+
 t  {\xx}^2 \hh 
 \\
&=
{\xx}t+ \hh {\xx}^2 t  
-
 t \beta {\xx} t 
-
\hh {\xx}^2t 
- t  {\xx} 
- t  {\xx}^2\hh
+ t  {\xx} \alpha t 
+
 t  {\xx}^2 \hh 
\\
&=
{\xx}t 
-
 t \beta {\xx} t  
- t  {\xx} 
+ t  {\xx} \alpha t 
\\
&=
[{\xx},t] 
+ t  ({\xx} \alpha - \beta {\xx})t 
=
[{\xx},t] 
=
D^{\xx}(t) \qedhere. 
\end{align*}
\end{proof}

\section{Pseudo perturbation lemma}

From the introduction, we recall that a {\em pseudocontraction\/}
consists of a chain complex $\NNN$, together with
a chain endomorphism $\tau \colon \NNN \to \NNN$
and a homogeneous degree $1$ operator
$\hhh \colon \NNN \to \NNN$, subject to,
with $\hhh$ substituted for $\hh$, \eqref{pc1} and {\eqref{pc2}.
Pseudocontractions manifestly correspond bijectively
to differential graded $\AH$-modules.
A pseudocontraction  $(\NNN,\tau, \hhh)$ 
having $\tau = \NNN$
is an ordinary cone,
together with a conical contraction, cf., e.g., 
\cite[IV.1.5 p.~168]{MR0365571} for this notion.
This observation justifies, perhaps, our pseudocontraction terminology.
In Proposition \ref{compar} we spell out the relationship between
pseudocontractions and ordinary contractions.

Consider a pseudocontraction $(\NNN,\tau,\hhh)$.
Recall that a {\em perturbation\/}
$\partial$ of the differential $d$
on $\NNN$ is a homogeneous degree $-1$ operator $\partial$ on 
$\NNN$ such that the operator $d+ \partial$ on $\NNN$ has square zero,
i.e., is itself a differential.
The pseudocontraction structure
$(\hhh,\tau)$ on $\NNN$
being equivalent to an $\AH$-module structure on $\NNN$
over the pseudocontraction algebra $\AH$,  the perturbation
$\partial$ determines and is determined by a unique
extension to an
$\AAA$-module structure on $\NNN$
over the pseudo perturbation algebra $\AAA =\AP  *\AH$.
Henceforth our convention is this:
We distinguish in notation between $\hh,x \in \AAA$ and the operators 
$\hhh$ and $\partial$ 
on $\NNN$ they determine, but we do not distinguish 
in notation
between $t,\tau, \alpha, \beta \in \widehat \AAA$ 
and the operators 
they determine 
on $\NNN$ (provided that
the degree zero endomorphisms 
$\NNN + h \partial $ and $\NNN + \partial h$ of $\NNN$
are invertible).

Let $\NNN_\partial$ denote the chain complex
$(N,d + \partial)$, and 
write 
\begin{align}
t_\partial &= \alpha t \beta \colon \NNN \to \NNN
\\
h_{\partial}&=\alpha h
=h\beta  \colon \NNN \to \NNN.
\label{1.1.4.0}
\end{align}

\begin{cor}[Pseudo perturbation lemma]
\label{pseudolem}
Suppose that the degree zero endomorphisms 
$\NNN + h \partial $ and $\NNN + \partial h$ of $\NNN$
are invertible, that is, that the
$\AAA$-module structure on $\NNN$ extends to an
$\widehat \AAA$-module structure 
on $\NNN$
over the
pseudo Maurer Cartan perturbation algebra $\widehat \AAA$.
Then
$(\NNN_\partial, \NNN -t_\partial , \hhh_\partial)$
is a pseudocontraction as well.
\end{cor}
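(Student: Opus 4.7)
The plan is to verify the two defining axioms of a pseudocontraction for the triple $(\NNN_\partial, \NNN - t_\partial, \hhh_\partial)$, namely $\hhh_\partial^2 = 0$ and $(d+\partial)\hhh_\partial + \hhh_\partial(d+\partial) = \NNN - t_\partial$, by deriving each one as an identity inside $\widehat\AAA$ and transporting it to $\NNN$ via the $\widehat\AAA$-module structure. The invertibility hypothesis on $\NNN + h\partial$ and $\NNN + \partial h$ is precisely what lifts the $\AAA$-module structure on $\NNN$ to a $\widehat\AAA$-module structure, that is, what allows $\alpha$ and $\beta$ to act on $\NNN$.

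For the first axiom, the idea is to exploit both expressions $\hhh_\partial = \alpha \hhh = \hhh \beta$ given in \eqref{1.1.4.0}: writing the left factor as $\alpha \hhh$ and the right factor as $\hhh \beta$ isolates a central $\hhh^2$, which vanishes by the pseudocontraction axiom on $(\NNN, \tau, \hhh)$. So $\hhh_\partial^2 = \alpha \hhh^2 \beta = 0$; this step is immediate.

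For the second axiom, the point is that under the $\widehat\AAA$-module structure, the graded commutator with $d+\partial$ on operators corresponds to the twisted differential $D^x$ on $\widehat\AAA$. Thus the axiom amounts to the algebraic identity $D^x(\alpha \hh) = 1 - \alpha t \beta$ in $\widehat\AAA$. Theorem \ref{structural}, applied to the generator $\hh$ in the form $D^\phi(\hh) = D^x(\hh)$, delivers precisely this: applying the involution $\phi$ to that equality and using $\phi^2 = \mathrm{Id}$ together with $\phi(x) = -x$, $\phi(\hh) = \alpha \hh$, and $\phi(\tau) = 1 - \alpha t \beta$, one obtains $D^x(\alpha \hh) = \phi(\tau) = 1 - \alpha t \beta$, as required.

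The main obstacle, such as it is, lies in bookkeeping: one has to handle the signs in the graded commutator carefully, and confirm that the correspondence between the algebra differentials $D$, $D^x$ on $\widehat\AAA$ and the operators $[d,\cdot]$, $[d+\partial,\cdot]$ on $\NNN$ is indeed compatible with the $\widehat\AAA$-module structure. Once that correspondence is in place, both axioms fall out with no genuine calculation, the second one being essentially a direct reformulation of Theorem \ref{structural} specialized to the generator $\hh$.
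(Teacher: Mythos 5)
Your proposal is correct and takes essentially the same approach as the paper: the paper's proof restricts the $\widehat \AAA^{\xx}$-module structure on $\NNN_\partial$ along the composite $\AH \subseteq \widehat\AAA \stackrel{\phi}\longrightarrow \widehat\AAA^{\xx}$ and invokes the correspondence between $\AH$-modules and pseudocontractions, which is exactly what you unwind by hand when you transport the defining relations $D\hh=\tau$ and $\hh^2=0$ through $\phi$ via Theorem \ref{structural} to obtain $D^{\xx}(\alpha\hh)=\phi(\tau)=1-\alpha t\beta$ and $(\alpha\hh)(\hh\beta)=\alpha\hh^2\beta=0$. The difference is purely presentational, and your bookkeeping (in particular, using Theorem \ref{structural} in the equivalent form $\phi D = D^{\xx}\phi$ evaluated on the generator $\hh$) checks out.
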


\begin{proof}
The chain complex $\NNN_\partial$
is a module over $\widehat \AAA^{\xx}$. 
The composite
$\AH \stackrel{\subseteq}\longrightarrow 
\widehat A \stackrel{\phi}\longrightarrow 
\widehat A^{\xx}$
turns  $\NNN_\partial$ into an $\AH$-module in such a way
that the members $\tau$ and $\hh$ act on $\NNN$ as
the operators
$\NNN -t_\partial$ and $\hhh_\partial$. This establishes the
assertion since
$\AH$-module structures characterize pseudocontractions.
\end{proof}

\begin{rema}{\rm 
Suppose that $\NNN$ is a filtered chain complex, that the
filtration is complete, see, e.g., \cite[VIII.8 p.~292]{MR0346025}, and
let $\partial$ be a perturbation of the differential $d$ of $\NNN$
that lowers filtration. 
Then the series
$\sum_{n\geq 0}  (-h\partial)^n$
and
$\sum_{n\geq 0}(-\partial h)^n$
converge, and hence
the degree zero endomorphisms 
$\NNN + h \partial $ and $\NNN + \partial h$ of $\NNN$
are invertible.
In practice, for the degree filtration of a chain complex
that is bounded below (e.g., concentrated in non-negative degrees),
completeness is immediate. 
In fact, the convergence is then naive in the sense that, 
evaluated on a specific homogeneous element, 
$\sum_{n\geq 0}  (-h\partial)^n$
and
$\sum_{n\geq 0}(-\partial h)^n$ yield finite sums.
}
\end{rema}

Define a
{\em weak contraction\/}
 $  (\MMM
     \begin{CD}
      \null @>{\nabla}>> \null\\[-3.2ex]
      \null @<<{\pi}< \null
     \end{CD}
    \NNN, h) 
$
of chain complexes to consist of

-- chain complexes $\MMM$ and $\NNN$,
\newline
\indent -- a surjective chain map $\pi\colon \NNN \to \MMM$ and 
an injective chain map $\nabla \colon \MMM \to
\NNN$,
\newline
\indent --  a morphism $h\colon \NNN \to \NNN$ of the underlying graded
modules of degree 1,
\newline
subject  to the axioms
\begin{align}
Dh &= \NNN -\nabla \pi, \label{co1}
\\
hh &= 0 .
\label{side1}
\end{align}

Given a pseudocontraction $(\NNN,\tau, \hhh)$,
let
$
\MMM= t \NNN \subseteq \NNN
$,
let $\pi =t\colon \NNN \to \MMM$, and denote the
injection $\MMM \subseteq \NNN$ by $\nabla \colon \MMM \to \NNN$.
Since $t$ is a chain map,
$\MMM$ is a chain complex,
$\pi$ and $\nabla$ are chain maps, and
 $  (\MMM
     \begin{CD}
      \null @>{\nabla}>> \null\\[-3.2ex]
      \null @<<{\pi}< \null
     \end{CD}
    \NNN, h) 
$
is a weak contraction.
Further,
$t = \nabla \pi$.
Likewise, a weak contraction
$  (\MMM
     \begin{CD}
      \null @>{\nabla}>> \null\\[-3.2ex]
      \null @<<{\pi}< \null
     \end{CD}
    \NNN, h) 
$
determines the 
pseudocontraction ${(\NNN, \NNN- \nabla \pi, \hhh)}$.
In this vein, 
the assignment to $(\NNN,\tau, \hhh)$ of  $  (\MMM
     \begin{CD}
      \null @>{\nabla}>> \null\\[-3.2ex]
      \null @<<{\pi}< \null
     \end{CD}
    \NNN, h) 
$
yields an equivalence between
pseudocontractions
and weak contractions.

Consider a weak contraction
$  (\MMM
     \begin{CD}
      \null @>{\nabla}>> \null\\[-3.2ex]
      \null @<<{\pi}< \null
     \end{CD}
    \NNN, h) $.
Let $\partial$ be a perturbation of the differential on $\NNN$,
and
suppose that the degree zero endomorphisms 
$\NNN + h \partial $ and $\NNN + \partial h$ of $\NNN$
are invertible.
Let
\begin{align}
\dell &= 
\ppi\partial \alpha\nabla 
=\ppi\beta\partial \nabla \colon \MMM \to \MMM
\label{1.1.1.0}
\\
\nabla_{\partial}&= 
\alpha\nabla \colon \MMM \to \NNN
\label{1.1.2.0}
\\
\ppi_{\partial}&
=\ppi\beta \colon \NNN \to \MMM ,
\label{1.1.3.0}
\end{align}
and let $\MMM_\dell$ denote the graded object $\MMM$, endowed with the 
operator $d+\dell$.
Plainly,
\begin{equation}
t_\partial \, (= \alpha t \beta) = \nabla_\partial \pi_\partial.
\label{plainly}
\end{equation}

\begin{lem} 
\label{tech}
The operator $\dell$ on $\MMM$ satisfies the identities
\begin{align}
\pi_\partial (d+ \partial) &= (d+\dell) \pi_\partial
\label{tech1}
\\
 (d+ \partial) \nabla_\partial&= \nabla_\partial(d+\dell) .
\label{tech2}
\end{align}
Hence $\dell$ is a perturbation of the differential on $\MMM$,
and 
$\pi_\partial \colon \NNN_\partial \to \MMM_\dell$ and
$\nabla_\partial \colon \MMM_\dell \to \NNN_\partial$
are chain maps. Furthermore,
\begin{align}
\pi_\partial\nabla_\partial(d+\dell) =
\pi_\partial (d+ \partial)\nabla_\partial &= (d+\dell) \pi_\partial\nabla_\partial .
\end{align}
\end{lem}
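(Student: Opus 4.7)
The plan is to verify the two main identities \eqref{tech1} and \eqref{tech2} by direct algebraic manipulation, after which every other assertion in the lemma will follow formally. The inputs are the formulas \eqref{dif1}--\eqref{comm2}, together with the operator identities $1-\alpha = \alpha h\partial$ and $1-\beta = \partial h\beta$ extracted from \eqref{insp3}--\eqref{insp4} by reading $(1+h\partial)\alpha = 1$ and $(1+\partial h)\beta = 1$.

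I begin with \eqref{tech2}. Expanding the definitions of $\nabla_\partial$ and $\dell$, using $\nabla d = d\nabla$, and invoking the operator identity $\nabla\pi = t$ (established in the paragraph preceding the lemma), I rewrite
\[
(d+\partial)\nabla_\partial - \nabla_\partial(d+\dell) = (D\alpha)\nabla + (1-\alpha t)\,\partial\alpha\nabla,
\]
where $D\alpha$ denotes the operator commutator $d\alpha - \alpha d$, which is exactly what \eqref{dif1} describes. The step that makes the calculation close is the factorization $1 - \alpha t = 1 - \alpha + \alpha\tau = \alpha(\tau + h\partial)$, which turns the second summand into $\alpha(\tau\partial + h\partial^2)\alpha\nabla$; this cancels against $(D\alpha)\nabla$ by \eqref{dif1}.

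For \eqref{tech1} I carry out the mirror computation: expand $\pi_\partial(d+\partial) - (d+\dell)\pi_\partial$, apply $d\pi = \pi d$, and use \eqref{comm} in the operator form $\partial\alpha = \beta\partial$. This leaves $-\pi(D\beta) + \pi\beta\partial(1 - t\beta)$, and the dual factorization $1 - t\beta = (\tau + \partial h)\beta$ combined with \eqref{dif2} causes the expression to vanish.

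Once \eqref{tech1} and \eqref{tech2} are in place, the chain map assertions for $\pi_\partial$ and $\nabla_\partial$ are literal restatements. Applying $(d+\partial)$ on the left of \eqref{tech2} and using $(d+\partial)^2 = 0$ gives $\nabla_\partial(d+\dell)^2 = 0$; since $\nabla_\partial = \alpha\nabla$ is injective (invertibility of $\alpha$ and injectivity of $\nabla$), it follows that $(d+\dell)^2 = 0$, so $\dell$ is a perturbation of the differential on $\MMM$. The three-fold identity at the end of the statement is obtained by composing \eqref{tech2} with $\pi_\partial$ on the left and \eqref{tech1} with $\nabla_\partial$ on the right.

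The main obstacle is spotting the two factorizations $1 - \alpha t = \alpha(\tau + h\partial)$ and $1 - t\beta = (\tau + \partial h)\beta$: these are precisely what matches the residual terms against the formulas \eqref{dif1} and \eqref{dif2} for $D\alpha$ and $D\beta$. Once they are in hand the rest of the manipulation is purely mechanical.
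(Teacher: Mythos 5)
Your proof is correct and takes essentially the same route as the paper's: both verify \eqref{tech1} and \eqref{tech2} by direct operator computation from \eqref{dif1}, \eqref{dif2}, $t=\nabla\pi$, and the inverse relations $\alpha(1+h\partial)=1$ and $(1+\partial h)\beta=1$, with your factorizations $1-\alpha t=\alpha(\tau+h\partial)$ and $1-t\beta=(\tau+\partial h)\beta$ being a repackaging of the paper's cancellation of $\pi\beta\partial(1+\partial h)\beta$ against $\pi\beta\partial$ after writing $\tau=1-t$. Your explicit derivations of the remaining assertions (square-zero of $d+\dell$ via injectivity of $\nabla_\partial=\alpha\nabla$, and the final identity by composing \eqref{tech1} and \eqref{tech2}) are sound and merely spell out what the paper leaves implicit.
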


\begin{proof}
Identity 
 \eqref{dif2} entails
$D\beta =   \beta ({\xxx} \tau +{\xxx}^2 \hhh)\beta$.
Hence
\begin{align*}
\pi_\partial \circ(d + \partial)&=\pi \beta \circ(d + \partial)
\\
&= \pi \beta d  + \pi \beta \partial
\\
&=\pi (d \beta - \beta ({\xxx} \tau +{\xxx}^2 \hhh)\beta) + \pi \beta \partial
\\
&=\pi d \beta -  \pi \beta  {\xxx} (\tau\beta +  {\xxx} \hhh \beta) + \pi \beta \partial
\\
&=d \pi \beta  -  \pi \beta  {\xxx} ((1-t)\beta +  {\xxx} \hhh \beta) + \pi \beta \partial
\\
&=d \pi \beta  
-  \pi \beta  {\xxx} (1+  {\xxx} \hhh) \beta 
+\pi \beta  {\xxx} t\beta 
+ \pi \beta \partial
\\
&=d \pi \beta  
+\pi \beta  {\xxx} t\beta 
\\
&=d \pi \beta  
+\pi \beta  \partial  \nabla \pi \beta 
\\
&
= (d+ \pi \beta \partial \nabla)\circ \pi \beta
\\
&
= (d+ \pi \partial \alpha \nabla)\circ \pi \beta
\\
&
= (d+ \dell)\circ \pi_\partial .
\end{align*}
Likewise,
identity \eqref{dif1} entails  
$D\alpha =  - \alpha (\tau {\xxx} +\hhh {\xxx}^2)\alpha$.
Hence
\begin{align*}
 (d+ \partial) \nabla_\partial&=  (d+ \partial) \alpha \nabla
\\
&= d \alpha \nabla + \partial \alpha \nabla
\\
&=  (\alpha d -\alpha (\tau {\xxx} +\hhh {\xxx}^2)\alpha) \nabla + \partial \alpha \nabla
\\
&=  \alpha d \nabla  -\alpha ((1-t)  +\hhh {\xxx}){\xxx}\alpha \nabla + \partial \alpha \nabla
\\
&=  \alpha \nabla d  -\alpha (1  +\hhh {\xxx}){\xxx}\alpha \nabla 
+\alpha t {\xxx}\alpha \nabla
+ \partial \alpha \nabla
\\
&=  \alpha \nabla d   
+\alpha t {\xxx}\alpha \nabla
\\
&=  \alpha \nabla d   
+\alpha \nabla \pi {\xxx}\alpha \nabla
\\
&=  \alpha \nabla 
(d   + \pi \partial\alpha \nabla)
\\
&=  \nabla_\partial 
(d   + \dell) . \qedhere
\end{align*}
\end{proof}

\begin{cor}[Pseudo perturbation lemma; second version]
\label{pseudolem2}
Let
$  (\MMM
     \begin{CD}
      \null @>{\nabla}>> \null\\[-3.2ex]
      \null @<<{\pi}< \null
     \end{CD}
    \NNN, h) 
$
be a weak contraction of chain complexes,
let $\partial$ be a perturbation of the differential on $\NNN$,
and
suppose that the degree zero endomorphisms 
$\NNN + h \partial $ and $\NNN + \partial h$ of $\NNN$
are invertible.
Then
\begin{equation}
   \left(M_{\dell}
     \begin{CD}
      \null @>{\nabla_{\partial}}>> \null\\[-3.2ex]
     \null @<<{\ppi_{\partial}}< \null
     \end{CD}
   N_{\partial} , h_{\partial} \right)
\label{2.66w}
  \end{equation}
is a weak a contraction.
\end{cor}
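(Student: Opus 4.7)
The plan is to deduce this second version directly from Corollary \ref{pseudolem} by invoking the equivalence between pseudocontractions and weak contractions spelled out just before the corollary statement, and then filling in the maps using Lemma \ref{tech} together with the identity \eqref{plainly}.

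First I would pass from the given weak contraction to its associated pseudocontraction $(\NNN,\tau,\hhh)$, where $\tau = \NNN - \nabla\ppi$, so that $t = 1-\tau = \nabla\ppi$. Corollary \ref{pseudolem} applied to this pseudocontraction and the perturbation $\partial$ yields that $(\NNN_\partial, \NNN - t_\partial, \hhh_\partial)$ is again a pseudocontraction; unwinding the definitions, this means precisely that
\begin{equation*}
(d+\partial)\hhh_\partial + \hhh_\partial(d+\partial) \;=\; \NNN - t_\partial,\qquad \hhh_\partial^{\,2}=0.
\end{equation*}
By \eqref{plainly} we have $t_\partial = \nabla_\partial \ppi_\partial$, and therefore the first identity becomes exactly $D^\partial \hhh_\partial = \NNN - \nabla_\partial \ppi_\partial$, that is, the weak contraction axiom \eqref{co1} in the perturbed setting; the second identity is \eqref{side1}.

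It then remains only to check that $\nabla_\partial$ is injective, $\ppi_\partial$ is surjective, and both are chain maps from/to $\MMM_\dell$ and $\NNN_\partial$. Injectivity of $\nabla_\partial = \alpha\nabla$ is immediate from the injectivity of $\nabla$ and the invertibility of $\alpha$; surjectivity of $\ppi_\partial = \ppi\beta$ is immediate from the surjectivity of $\ppi$ and the invertibility of $\beta$. That $\ppi_\partial$ and $\nabla_\partial$ intertwine $d+\partial$ with $d+\dell$ is the content of \eqref{tech1} and \eqref{tech2} of Lemma \ref{tech}, which also guarantees that $\dell$ is a genuine perturbation of $d$ on $\MMM$ so that $\MMM_\dell$ is a bona fide chain complex.

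I do not expect a real obstacle: the structural work has already been done in Theorem \ref{structural}, Corollary \ref{pseudolem} and Lemma \ref{tech}. The one point that has to be handled with care is the identification $t_\partial = \nabla_\partial \ppi_\partial$ via \eqref{plainly}; this is what ensures that the pseudocontraction produced by Corollary \ref{pseudolem} has for its associated projector exactly $\nabla_\partial \ppi_\partial$, so that the output of the translation back to a weak contraction has precisely the maps $\nabla_\partial$ and $\ppi_\partial$ specified in the statement rather than some a priori different pair.
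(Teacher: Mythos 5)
Your proposal is correct and follows essentially the same route as the paper's own proof: pass to the associated pseudocontraction $(\NNN,\NNN-\nabla\pi,\hhh)$, apply Corollary \ref{pseudolem}, identify $t_\partial=\nabla_\partial\pi_\partial$ via \eqref{plainly}, and invoke Lemma \ref{tech} for the chain-map properties and for $\dell$ being a perturbation. Your explicit verification of injectivity of $\nabla_\partial$ and surjectivity of $\pi_\partial$ from the invertibility of $\alpha$ and $\beta$ is a point the paper leaves implicit, but it is a welcome addition rather than a deviation.
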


\begin{proof} The weak contraction
 $  (\MMM
     \begin{CD}
      \null @>{\nabla}>> \null\\[-3.2ex]
      \null @<<{\pi}< \null
     \end{CD}
    \NNN, h) 
$
determines the pseudocontraction 
\[
(\NNN,\tau,\hhh) = (\NNN,\NNN - \nabla \pi,\hhh),
\]
and the pseudocontraction structure
and the perturbation $\partial$ determine an
$\widehat \AAA$-module structure
on $\NNN$ over the pseudo Maurer-Cartan perturbation algebra
$\AAA= \AP * \AH$. By Corollary \ref{pseudolem},
$(\NNN_\partial, \NNN-t_\partial, \hhh_\partial)$
is a pseudocontraction, that is,
\begin{align*}
\hhh_\partial^2 &=0
\\
(d + \partial)\circ t_\partial &= t_\partial \circ (d + \partial)
\\
(d + \partial)\circ \hhh_\partial 
+ \hhh_\partial \circ (d + \partial) &= \NNN -t_\partial  
\\
&=\NNN -\nabla_\partial \pi_\partial ,
\end{align*}
cf. \eqref{plainly} above.
In view of Lemma \ref{tech},
we conclude that \eqref{2.66w} is a weak contraction.
\end{proof}

\begin{rema}
\label{content}
{\rm
Under the circumstances of Corollary \ref{pseudolem2},
the perturbed pseudocontraction
$(\NNN_\partial,\NNN - t_\partial, \hhh_\partial)$
determines the weak contraction
 $  \left(\left(t_\partial\NNN, (d + \partial)|_{t_\partial \NNN}\right)
     \begin{CD}
      \null @>{j}>> \null\\[-3.2ex]
      \null @<<{t_\partial}< \null
     \end{CD}
    \NNN_\partial, h_\partial\right) 
$.
Inspection of the diagram
\begin{equation*}
\xymatrixcolsep{4.5pc}
\xymatrix{
\NNN \ar[r]^\pi
&
\MMM \ar[dr]_{\nabla_\partial} \ar[r]^\nabla
&
\NNN
\ar[d]^\alpha
\\
\NNN \ar[u]^\beta \ar[r]_{t_\partial}
&
t_\partial \NNN  \ar[r]_j
&
\NNN
}
\end{equation*}
shows that the values of $\nabla_\partial = \alpha \nabla$
lie in $t_\partial \NNN$ in such a way that 
$\nabla_\partial$ is 
chain isomorphism 
\begin{equation}
\nabla_\partial\colon \MMM_\dell=(\MMM, d + \dell) \longrightarrow 
(t_\partial \NNN, (d + \partial)|_{t_\partial \NNN}).
\label{tech3}
\end{equation} 
The morphism $\nabla_\partial$ being a chain map of the kind \eqref{tech3}
is the content of identity \eqref{tech2}.

}
\end{rema}

\section{Relationship with ordinary homological perturbation theory}
\label{hpt}

The reader can find details about
H(omological) P(erturbation) T(heory)
in
\cite{MR2640649, MR2762544, MR2762538, MR2820385, MR1109665, MR1932522}.
Among the classical references are
\cite{MR0220273, MR0056295, MR0065162, MR0301736, MR662761}.

A
{\em contraction\/} of chain complexes is a weak contraction
 $  (\MMM
     \begin{CD}
      \null @>{\nabla}>> \null\\[-3.2ex]
      \null @<<{\pi}< \null
     \end{CD}
    \NNN, h) 
$
 subject to, furthermore,
the axioms
\begin{align}
 \pi \nabla &= \MMM,
\label{co0}
\\
\pi h &= 0, \quad h \nabla = 0 \qquad
\text{({\em annihilation
properties\/} or {\em side conditions\/})}. 
\label{side}
\end{align}
\begin{rema}{\rm
In the definition of a contraction, as opposed to that of a weak contraction,
there is no need to require $\pi$ to be surjective and $\nabla$ to be
injective since these properties are consequences of \eqref{co0}.
}
\end{rema}

For a contraction of chain complexes of the particular kind
$
\Nsddata {\NNN} {\phantom a\nabla}{\phantom a\pi}
{\HH(\NNN)}h $,
letting
$
\HHH = \ker (h) \cap \ker(d) = \nabla \HH(\NNN)$,
we see that the homogeneous degree $j$ constituent $\NNN_j$ ($j \in \mathbb Z$) 
of $\NNN$ decomposes as
\begin{equation} 
\NNN_j = d \NNN_{j+1} \oplus \HHH_j \oplus h(d\NNN_j). 
\label{4.5.1}
\end{equation}
In the situation of Example \ref{KS} below,
\eqref{4.5.1} plays the role of a {\em Hodge decomposition\/}.
On p.~19 
of \cite{MR0195995},
Nijenhuis and Richardson indeed
refer to a decomposition of the kind \eqref{4.5.1}
(not using the language of homological perturbation theory)
as a \lq\lq Hodge decomposition\rq\rq.

\begin{examp}[Kodaira-Spencer Lie algebra] 
\label{KS}
{\rm See \cite{MR0112157, MR0112154}. Take the ground ring 
to be the field $\CC$
of complex numbers,  
consider a complex manifold $M$, 
let $\tau_M$
denote the holomorphic tangent bundle
of $M$, let $\overline \partial$ be the corresponding
Dolbeault operator,
and let
$\gg = (\AAA^{(0,*)}(M,\tau_M), \overline \partial)$
be the {\em Kodaira-Spencer algebra\/} of $M$,
endowed with the homological grading 
\begin{equation}
\gg_0 = \AAA^{(0,0)}(M,\tau_M),
\quad
\gg_{-1} = \AAA^{(0,1)}(M,\tau_M),
\quad
\gg_{-2} = \AAA^{(0,2)}(M,\tau_M),
\quad
\text{etc.}
\end{equation}
Thus, with our convention on degrees,
$\HH_*(\gg) = \HH^{-*}(M,\tau_M)$,
the
cohomology of $M$ with values in the sheaf of germs of
holomorphic vector fields.
A Hodge decomposition of $\gg$ now yields a special kind of contraction.
}
\end{examp}

Following \cite{chuanglazarev}, define an {\em abstract Hodge decomposition\/}
of a chain complex $X$ to consist of operators
$t$ and $h$ on $X$ of degree $0$ and $1$, respectively, such that
\begin{align}
h^2 &= 0
\\
Dh &= 1 -t
\\
Dt&=0
\\
t^2 &= t
\label{ah4}
\\
th = ht &=0.
\label{ah5}
\end{align}

\begin{rema} The conditions characterizing an abstract Hodge decomposition
are not independent. For example,
$ht=0$ implies $t^2=t$:
$0= D(ht) = (Dh)t =(1-t)t$.
\end{rema}

An abstract Hodge decomposition is a special kind of  pseudocontraction,
and contractions and abstract Hodge decompositions
are equivalent notions:
Let
$
\Nsddata {\NNN} {\phantom a\nabla}{\phantom a\pi}
{\MMM}h 
$
be a contraction of chain complexes, and 
let $t = \nabla \pi$. Then $t$ and $h$ yield an abstract Hodge decomposition
of $\NNN$. Likewise, let  $(\NNN, \tau,\hhh)$ be a pseudocontraction,
let $t = \mathrm{Id}- \tau \colon \NNN \to \NNN$,
let $\MMM = t \NNN$, and let $j \colon M \to \NNN$ denote the inclusion.

\begin{prop}
\label{compar}
Let  $(\NNN, \tau,\hhh)$ be a pseudocontraction.
The following are equivalent.
\begin{enumerate}
\item[{\rm (i)}]
The operators $\hhh$ and $t=1-\tau$
yield an abstract Hodge decomposition of $\NNN$. 
\item[{\rm (ii)}]
The operators
$\hhh$ and $t=1-\tau$ satisfy {\rm \eqref{ah4}} and {\rm \eqref{ah5}}.
\item[{\rm (iii)}]
Beyond the side condition $h^2=0$, 
the operators
$\hhh$ and $t=1-\tau$ satisfy
the side conditions $t h=0$ and $h j=0$, cf. {\rm \eqref{side}}, that is,
$
\Nsddata {\NNN} {\phantom a j}{\phantom a t}
{\MMM}h 
$
is an ordinary  contraction. 
\end{enumerate}

\end{prop}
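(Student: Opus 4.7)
The plan is to trace definitions, exploiting the fact that three of the five axioms of an abstract Hodge decomposition already come for free from the pseudocontraction axioms, so that the only genuine content of (i) beyond being a pseudocontraction is (ii).

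First I would establish (i) $\Leftrightarrow$ (ii). The abstract Hodge decomposition axioms are $h^2=0$, $Dh=1-t$, $Dt=0$, $t^2=t$, and $th=ht=0$. Among these, the first two are literally the pseudocontraction axioms \eqref{pc1}--\eqref{pc2} once one substitutes $t=1-\tau$. The third, $Dt=0$, is automatic: since $\tau=D\hh$ in any pseudocontraction, one has $D\tau = D^2\hh = 0$ and therefore $Dt=-D\tau=0$. Thus the only conditions not already guaranteed by the pseudocontraction structure are precisely $t^2=t$ and $th=ht=0$, which is exactly (ii). This proves the equivalence of (i) and (ii) without any computation.

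Next I would show (ii) $\Rightarrow$ (iii). Setting $\MMM=tN$ and letting $j\colon\MMM\to\NNN$ denote the inclusion, one must verify that $(\NNN \rightleftarrows \MMM, h)$ is a genuine contraction. The contraction axioms beyond the defining relations of a pseudocontraction are $\pi\nabla=\MMM$, $\pi h=0$, and $h\nabla=0$, that is, $tj=\mathrm{Id}_\MMM$, $th=0$, and $hj=0$. The middle one is in (ii); the last is $ht=0$, also in (ii), since for any $n\in\NNN$ one has $h(j(tn))=h(tn)=(ht)(n)=0$. The first reduces to $t^2n=tn$ for every $n\in\NNN$ once we write $m=tn\in\MMM$, so it is exactly $t^2=t$, which is again in (ii). That $t$ is a chain map—needed to make $\pi=t$ a chain map—uses $Dt=0$, which as noted above is automatic for pseudocontractions.

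Finally, for (iii) $\Rightarrow$ (ii): the condition $th=0$ is directly stated; the condition $hj=0$ says $htn=0$ for every $n$, which is $ht=0$; and once $ht=0$ is known, the identity $t^2=t$ drops out from the trick already noted in the Remark preceding the proposition, namely
\[
0 \;=\; D(ht) \;=\; (Dh)t - h(Dt) \;=\; (1-t)t \;=\; t - t^2.
\]
There is essentially no obstacle here — the whole proposition is an unpacking of definitions; the only mildly non-formal step is the $D(ht)=0$ trick, but this is already isolated in the preceding Remark and can simply be cited.
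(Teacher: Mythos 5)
Your proof is correct and follows essentially the paper's intended route: the paper dismisses the proposition as ``straightforward,'' noting only that \eqref{ah4} is equivalent to \eqref{co0}, and your unpacking supplies exactly those routine verifications, including the observation that $Dt=0$ is automatic (since $\tau=D\hhh$ and $D^2=0$) and the $0=D(ht)=(Dh)t=(1-t)t$ trick already isolated in the Remark preceding the proposition. Nothing further is needed.
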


\begin{proof}
This is straightforward. We only note that
\eqref{ah4} is equivalent to \eqref{co0}.
\end{proof}

\begin{cor}[Ordinary perturbation lemma]
\label{olem} 
Let
$  (\MMM
     \begin{CD}
      \null @>{\nabla}>> \null\\[-3.2ex]
      \null @<<{\pi}< \null
     \end{CD}
    \NNN, h) 
$
be a contraction of chain complexes,
let $\partial$ be a perturbation of the differential on $\NNN$,
and
suppose that the degree zero endomorphisms 
$\NNN + h \partial $ and $\NNN + \partial h$ of $\NNN$
are invertible.
Then
\begin{equation}
   \left(M_{\dell}
     \begin{CD}
      \null @>{\nabla_{\partial}}>> \null\\[-3.2ex]
     \null @<<{\ppi_{\partial}}< \null
     \end{CD}
   N_{\partial} , h_{\partial} \right)
\label{2.66}
  \end{equation}
constitutes a contraction.

\end{cor}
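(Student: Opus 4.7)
The plan is to deduce Corollary \ref{olem} directly from the pseudo perturbation lemma, Corollary \ref{pseudolem2}. A contraction is in particular a weak contraction, so that corollary immediately supplies the fact that \eqref{2.66} forms a weak contraction. What remains is to upgrade this to a full contraction by verifying the normalization \eqref{co0} and the two side conditions \eqref{side} for the perturbed data $\pi_\partial$, $\nabla_\partial$, $h_\partial$.

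First I would record the two rewriting identities $\alpha = 1 - h \beta {\xx}$ and $\beta = 1 - \beta {\xx} h$ coming from \eqref{insp3} and \eqref{insp4}; these are the natural vehicles through which the original side conditions $\pi h = 0$ and $h \nabla = 0$ propagate past the perturbation. Combined with $h^2 = 0$, they yield the two basic annihilation statements $\pi \beta h = 0$ (expand $\pi \beta h = \pi h - \pi \beta {\xx} h^{2}$) and $h \alpha = h$ (expand $h \alpha = h - h^{2} \beta {\xx}$).

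From these, the verification is short. For the side condition $\pi_\partial h_\partial = 0$, I would write $\pi_\partial h_\partial = \pi \beta h \beta$ using $h_\partial = h \beta$ and invoke $\pi \beta h = 0$. For $h_\partial \nabla_\partial = 0$ I would write $h_\partial \nabla_\partial = \alpha h \alpha \nabla$ using $h_\partial = \alpha h$, collapse the middle via $h \alpha = h$, and conclude by $h \nabla = 0$. For the normalization $\pi_\partial \nabla_\partial = \MMM$ I would expand $\pi \beta \alpha \nabla$ using the same two rewriting identities and reduce it, via $\pi \beta h = 0$ and $h \nabla = 0$, to $\pi \nabla = \MMM$.

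I do not expect any genuine obstacle beyond bookkeeping. The substantive input, namely the behavior of the perturbed differential and the weak-contraction axiom $D h_\partial = \NNN - \nabla_\partial \pi_\partial$, has already been absorbed into Corollary \ref{pseudolem2} through Theorem \ref{structural}. The classical side conditions then survive the perturbation essentially because $\alpha$ and $\beta$ differ from $1$ only by terms that contain $h$ on precisely the side where the original side conditions annihilate them.
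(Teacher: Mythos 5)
Your proposal is correct, and it shares the paper's skeleton: invoke Corollary \ref{pseudolem2} to get the weak contraction, then verify \eqref{co0} and \eqref{side} for the perturbed data. Where you genuinely diverge is in how those remaining axioms are checked. The paper expands $\alpha=(1+h\partial)^{-1}$ and $\beta=(1+\partial h)^{-1}$ as the geometric series $\sum_{n\geq 0}(-h\partial)^n$ and $\sum_{n\geq 0}(-\partial h)^n$ and cancels terms using $\pi h=0$, $h\nabla=0$, $h^2=0$; strictly speaking this presumes the inverses are realized by convergent series, which holds in the complete filtered setting of the paper's remark but is not literally part of the corollary's hypothesis, which demands only that $\NNN+h\partial$ and $\NNN+\partial h$ be invertible. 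Your route works directly with the algebraic identities: $\alpha=1-h\beta\partial$ is \eqref{insp4}, and $\beta=1-\beta\partial h$ follows most directly from $\beta\beta^{-1}=1$ (or from \eqref{insp3} combined with \eqref{comm}, so your attribution is harmless). From these, together with $h^2=0$ and the original side conditions, your auxiliary facts $\pi\beta h=0$ and $h\alpha=h$ are immediate, and then $\pi_\partial h_\partial=\pi\beta h\beta=0$, $h_\partial\nabla_\partial=\alpha h\alpha\nabla=\alpha h\nabla=0$, and $\pi_\partial\nabla_\partial=\pi\beta\alpha\nabla=\pi\alpha\nabla=\pi\nabla=\MMM$ all check out (you implicitly use $h_\partial=\alpha h=h\beta$, i.e. \eqref{1.1.4.0} via \eqref{comm2}, which is legitimate). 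What each approach buys: yours is valid under the bare invertibility hypothesis with no appeal to series convergence, so it is the tighter argument relative to the corollary as stated, and it stays entirely inside the algebraic formalism of $\widehat\AAA$; the paper's series computation, by contrast, makes the term-by-term cancellation mechanism transparent and connects visibly with the classical formulas for $\dell$, $\nabla_\partial$, $\ppi_\partial$, $h_\partial$ recorded in the subsequent remark. No gap in your argument.
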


\begin{rema}{\rm
Writing out \eqref{1.1.4.0} and \eqref{1.1.1.0} -- \eqref{1.1.3.0}
explicitly yields
the standard expressions
in
the perturbation lemma, see, e.g.,
\cite[Lemma 9.1]{MR2640649}:
\begin{align*} \dell &
=\ppi\partial (1+ h\partial)^{-1}\nabla = \sum_{n\geq 0} \ppi\partial (-h\partial)^n\nabla 
\\
&=\ppi(1+ \partial h)^{-1}\partial \nabla 
=
\sum_{n\geq 0} \ppi(-\partial h)^n\partial\nabla
\\
\nabla_{\partial}&
=(1+ h\partial)^{-1}\nabla = \sum_{n\geq 0} (-h\partial)^n\nabla
\\
\ppi_{\partial}&
=\ppi(1+ \partial h)^{-1}= \sum_{n\geq 0} \ppi(-\partial h)^n
\\
h_{\partial}&=(1+ h\partial)^{-1} h=\sum_{n\geq 0} (-h\partial)^n h 
\\
&=h(1+ \partial h)^{-1} 
=\sum_{n\geq 0}
h(-\partial h)^n
\end{align*}
}
\end{rema}

\begin{proof} In view of Corollary \ref{pseudolem2}, it remains to confirm \eqref{co0} and \eqref{side} for the 
perturbed data, that is, we must show that
$\pi_\partial \nabla_\partial= \MMM$ and
$\pi_\partial \hhh_\partial=0=
 \hhh_\partial\nabla_\partial$.
Using \eqref{co0} and \eqref{side} for the 
unperturbed data, we find
\begin{align*}
\pi_\partial \nabla_\partial &=\pi \beta \alpha \nabla
\\
&=
\pi (1+x \hh)^{-1}(1+ \hh x)^{-1}\nabla
\\
&=
\pi \sum_{n\geq 0} (-\partial h)^n\sum_{n\geq 0} (-h\partial)^n\nabla
\\
&=
\pi (1 -\partial h -h\partial +(\partial h)^2 + \partial h h\partial
+h\partial \partial h +(h \partial)^2 + \ldots)
\nabla
\\
&= \pi \nabla
\\
&=\MMM .
\end{align*}
The same kind of reasoning shows that
$\pi_\partial \hhh_\partial=0=
 \hhh_\partial\nabla_\partial$.
\end{proof}

\begin{rema}{\rm
Chuang-Lazarev refer to \cite[Theorem 3.5]{chuanglazarev} as the
\lq\lq abstract version of the HPL\rq\rq\ (homological perturbation lemma)
and claim that the \lq\lq ordinary HPL is a consequence of the
abstract one\rq\rq. They spell out this consequence
as \cite[Corollary 3.7]{chuanglazarev}.
\cite[Theorem 3.5]{chuanglazarev}  is similar to Theorem \ref{structural}
above, except that it incorporates the side conditions 
\eqref{side} and \eqref{co0} (or an equivalent  condition),
and \cite[Corollary 3.7]{chuanglazarev} yields
a result similar to Corollary \ref{pseudolem} above, 
but again with the side conditions \eqref{side} 
and a condition of the kind \eqref{co0}
incorporated.
From the resulting perturbed abstract Hodge decomposition of the kind
$(\NNN_\partial, t_\partial, \hhh_\partial)$, we can at once
deduce the 
contraction
\begin{equation}
  \left(\left(t_\partial\NNN, (d + \partial)|_{t_\partial \NNN}\right)
     \begin{CD}
      \null @>{j}>> \null\\[-3.2ex]
      \null @<<{t_\partial}< \null
     \end{CD}
    \NNN_\partial, h_\partial\right) .
\label{atonce}
\end{equation}
However, cf. Remark \ref{content} above,
when we start with a contraction 
 $  (\MMM
     \begin{CD}
      \null @>{\nabla}>> \null\\[-3.2ex]
      \null @<<{\pi}< \null
     \end{CD}
    \NNN, h) 
$
and a perturbation $\partial$ of the differential on $\NNN$,
we cannot deduce, from \eqref{atonce},  the perturbation of the kind $\dell$
of the differential on $\MMM$, cf. \eqref{1.1.1.0},
without further thought. Lemma \ref{tech} provides the requisite
further thought.
}
\end{rema}

\section{Insight into the structure of the pseudo Maurer-Cartan perturbation algebra}

As before, let  $u =\hh {\xx}$ and  $v ={\xx}\hh$.
We use the notation
$p(u,\tau)$, $p_1(u,\tau)$, $p_2(u,\tau)$, etc.
for non-commutative monomials
in $u$ and $\tau$ that
involve $u$ non-trivially
(but do not necessarily involve $\tau$) and
the notation
$q(v,\tau)$, $q_1(v,\tau)$, $q_2(v,\tau)$, etc.
for non-commutative monomials
in $v$ and $\tau$ that
involve $v$ non-trivially
(but do not necessarily involve $\tau$).
Further, we occasionally write  the multiplication
map (product operatioon) of $\AAA$ as 
$\,\cdot\, \colon \AAA \otimes \AAA \to \AAA$.

\begin{prop}
The degree zero algebra
$\AAA_0$ of the graded algebra
$\AAA$ has the following structural properties.
\begin{enumerate}
\item[{\rm(i)}]
As an $R$-module,
$\AAA_0$ is free, 
having as basis the monomials 
in the union of the four families
of the following kind:
\begin{itemize}
\item the monomials in $\tau$,
\item the monomials of the kind $p(u,\tau)$, 
\item the monomials of the kind $p(v,\tau)$, 
\item the monomials of the kind $p(u,\tau)p(v,\tau)$.
\end{itemize}

\item[{\rm(ii)}]
Iuxtaposition realizes products in $\AAA_0$ of the kind
\begin{equation}
\begin{gathered}
p(u,\tau) \cdot \tau^j,\ 
\tau^j\cdot p(u,\tau),\ 
q(v,\tau) \cdot \tau^j,\
\tau^j\cdot q(v,\tau),
\\ 
p(u,\tau) \cdot q(v,\tau),\ 
p_2(u,\tau) \cdot p_1(u,\tau) q(v,\tau),\ 
p(u,\tau)q_1(v,\tau) \cdot q_2(v,\tau).
\end{gathered}
\end{equation}

\item[{\rm(iii)}] Products of the kind
\begin{equation}
q(v,\tau) \cdot p(u,\tau),\ 
 p_1(u,\tau) q(v,\tau) \cdot p_2(u,\tau),\ 
q_2(v,\tau) \cdot p(u,\tau)q_1(v,\tau) 
\end{equation}
are zero.

\item[{\rm(iv)}] Hence, for a monomial
 of the kind $p(u,\tau)p(v,\tau)$,
\begin{equation}
(p(u,\tau)p(v,\tau))^2 = 0 .
\end{equation}

\item[{\rm(v)}]
As an $R$-algebra, $\AAA_0$ has the multiplicative generators
$u$, $v$, and $\tau$, subject to the relations
\begin{align}
v \tau^ju &= 0,\ j \geq 0.
\end{align}

\end{enumerate}
\end{prop}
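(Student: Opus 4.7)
The plan is to unpack the free-product decomposition of $\AAA=\AP*\AH$ in degree zero and match each degree-zero alternating tensor, uniquely, with one of the four families of monomials in $u,v,\tau$. First I would record the underlying graded algebra structures: the relation $D\xx+\xx^2=0$ is purely differential, so $\AP$ is the free associative algebra $R\langle\xx\rangle$ with $I\AP$ having $R$-basis $\{\xx^k:k\geq 1\}$; and, by the commuting proposition above, $\AH=\Lambda[\hh]\otimes R[\tau]$, so $I\AH$ has $R$-basis $\{\tau^j:j\geq 1\}\cup\{\hh\tau^j:j\geq 0\}$. With $\deg\xx=-1$, $\deg\hh=1$, $\deg\tau=0$, a degree-zero alternating tensor must have total $\hh$-count equal to total $\xx$-exponent.

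For the basis claim (i) I would sieve each summand of the free-product decomposition for its degree-zero part. The bounds that each $I\AH$-factor carries at most one $\hh$ and each $I\AP$-factor has exponent $\geq 1$ force: $(I\AH)_0$ contributes $\tau^k$ ($k\geq 1$); $\TT^{n}(I\AP,I\AH)_0=0$ unless $n=2m$, in which case all $\AP$-factors are $\xx$ and every $\AH$-factor is $\hh\tau^{j_i}$, parsing as $v\tau^{j_1}\cdots v\tau^{j_m}$; $\TT^{2m}(I\AH,I\AP)_0$ symmetrically yields $\tau^{j_1}u\tau^{j_2}\cdots u$; and $\TT^{2m+1}(I\AH,I\AP)_0$ splits into two subcases, either exactly one $\AH$-factor is $\tau^{j}$ with $j\geq 1$ (and all $\AP$-factors equal $\xx$), or every $\AH$-factor carries $\hh$ but exactly one $\AP$-factor is $\xx^2$. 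Parsing the tensor via $\hh\xx=u$, $\xx\hh=v$, $\tau\hh=\hh\tau$ then produces, according to the position of the exceptional factor, a unique monomial among types (i)--(iv); uniqueness is inherited from the direct-sum decomposition of the free product.

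Claims (ii) and (iii) follow by direct juxtaposition in the free product. Each product listed in (ii) never forces merging of two adjacent $\AH$-factors both carrying $\hh$, so they concatenate to the asserted monomials. By contrast, each product in (iii) always juxtaposes an $\AH$-factor $\hh\tau^{b}$ (terminal in some $q(v,\tau)$) with an $\AH$-factor $\hh\tau^{a}$ (initial in some $p(u,\tau)$); their product in $\AH$ is $\hh\tau^{b}\cdot\hh\tau^{a}=\hh^{2}\tau^{a+b}=0$. In particular $v\tau^{j}u=\xx\hh\tau^{j}\hh\xx=\xx\cdot\hh^{2}\tau^{j}\cdot\xx=0$. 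Claim (iv) is immediate: $(pq)^{2}=p(qp)q=0$ by (iii). Finally, for (v), the natural map $R\langle u,v,\tau\rangle/(v\tau^{j}u:j\geq 0)\to\AAA_0$ is well defined by (iii) and surjective since $u,v,\tau$ generate. A normal form on the quotient consists of words in $u,v,\tau$ containing no subword $v\tau^{j}u$, equivalently words in which all $u$'s precede all $v$'s; these are precisely the four families of (i), so the map is an isomorphism.

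The main obstacle will be the bookkeeping in (i): making the bijection between degree-zero alternating tensors and monomials of the four types fully explicit, especially the subtle case in $\TT^{2m+1}(I\AH,I\AP)_0$ where an interior $\tau^{j}$ with no $\hh$, or an interior $\xx^{2}$, marks the transition from the $u$-block to the $v$-block of a type (iv) monomial. Once this bijection is pinned down, everything else reduces to routine verification.
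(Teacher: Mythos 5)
Your proposal is correct, and it turns on the same mechanism as the paper's proof: the identity $v\tau^j u=\xx\hh\tau^j\hh\xx=\xx\tau^j\hh^2\xx=0$ (using $\hh\tau=\tau\hh$ and $\hh^2=0$), which forces every nonzero word in $u,v,\tau$ to have all $u$'s preceding all $v$'s. The difference is one of completeness rather than of idea. The paper's printed proof consists only of the monomial classification: it takes a general word $u^{k_1}v^{\ell_1}\tau^{m_1}u^{k_2}\cdots$, assumes it is nonzero in $\AAA_0$, and sorts it into the four families, leaving parts (ii)--(iv) and, more significantly, the linear independence asserted in (i) implicit in the free-product structure $\AAA=\AP * \AH$ cited from Hungerford. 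You make that implicit step explicit: the degree count ($\hh$-count equals total $\xx$-exponent) together with your sieve of the degree-zero parts of $\TT^n(I\AP,I\AH)$ and $\TT^n(I\AH,I\AP)$ --- in particular the two subcases in $\TT^{2m+1}(I\AH,I\AP)_0$, where either a lone $\tau^j$-factor ($j\geq 1$) or a lone $\xx^2$-factor marks the transition from the $u$-block to the $v$-block --- is exactly the bookkeeping needed to see that the four families form an $R$-basis, and it correctly reduces the juxtaposition claims (ii)--(iii) to whether two $\hh$-carrying $I\AH$-factors merge (the merged factor $\hh\tau^b\cdot\hh\tau^a=\hh^2\tau^{a+b}=0$ in case (iii), while in case (ii) every merged boundary factor, $\tau^{a+b}$, $\hh\tau^{a+b}$, or $\xx^{a+b}$, is again a basis element). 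Your presentation argument for (v), matching the $v\tau^ju$-avoiding normal forms in $R\langle u,v,\tau\rangle$ against the basis from (i), is likewise a clean packaging of what the paper asserts without further comment. In short: same route and same key relation, but your version actually proves the freeness and the product rules that the paper's terse argument delegates to the reader.
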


\begin{proof}
Consider a non-commutative monomial of the kind
\begin{equation}
u^{k_1} v^{\ell_1} \tau^{m_1}u^{k_2} v^{\ell_2} 
\tau^{m_2}\ldots u^{k_a} v^{\ell_a} \tau^{m_a},\ k_j,\ell_j,m_j \geq 0,
\ 0 \leq j \leq a.
\label{monom}
\end{equation}
Suppose that \eqref{monom} is non-zero in $\AAA_0$.
If $\ell_1 = \ldots =\ell_a=0=k_1 = \ldots =k_a$,
\eqref{monom} is a monomial in $\tau$.
Now suppose that
\eqref{monom} is not merely a monomial in $\tau$.
If $\ell_1 = \ldots =\ell_a=0$,
\eqref{monom} is of the kind $p(u,\tau)$.
If $k_1 = \ldots =k_a=0$,
\eqref{monom} is of the kind $q(v,\tau)$.
Suppose that
some $k_i$ and
some $\ell_j$ are non-zero, and let
$\ell_u$ be the smallest member among the non-zero $\ell_j$s.
Then $\ell_1 = \ldots = \ell_{u-1}=0$ and,
since
$v \tau^j u={\xx}\hh \tau^j \hh {\xx}=0 \in \AAA_0$ 
and since \eqref{monom} is non-zero,
we conclude
$k_{u+1}= \ldots = k_a=0$,
that is,
\eqref{monom} is of the kind $q(u,\tau)q(v,\tau)$.
\end{proof}

The homology algebras of the
differential graded algebras $\AH$, $\AP$, and $\AAA$
plainly reduce to isomorphisms
$\varepsilon\colon \Ho(\AH) \to R$,
$\varepsilon\colon \Ho(\AP) \to R$,
$\varepsilon \colon\Ho(\AAA) \to R$. More precisely:

\begin{prop}The differential graded algebras
$\AH$ and $\AP$ admit obvious algebra contractions
\begin{gather}
\Nsddata {\AH} {\phantom a j}{\phantom a\varepsilon}
{R}{h_\AH} 
\label{AHc}
\\
\Nsddata {\AP} {\phantom a j}{\phantom a\varepsilon}
{R}{h_\AP} ,
\label{APc}
\end{gather}
and these contractions induce an algebra contraction
\begin{equation}
\Nsddata {\AAA} {\phantom a j}{\phantom a\varepsilon}
{R}{h_\AAA} .
\label{AAc}
\end{equation}
Furthermore, application of the perturbation lemma
yields an algebra contraction
\begin{equation}
\Nsddata {\AAA^x} {\phantom a j}{\phantom a\varepsilon}
{R}{h_{\AAA^x}} .
\label{AAcx}
\end{equation}
\end{prop}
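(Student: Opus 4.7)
The plan is to construct the contractions in three stages: first \eqref{AHc} and \eqref{APc} explicitly on bases, then \eqref{AAc} by a tensor trick on the free-product decomposition, and finally \eqref{AAcx} by applying the perturbation lemma.

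\emph{Contractions of $\AH$ and $\AP$.} The graded algebra $\AH = \Lambda[s] \otimes R[\tau]$ has $R$-basis $\{\tau^k, s\tau^k : k \geq 0\}$ with $D(\tau^k) = 0$ and $D(s\tau^k) = \tau^{k+1}$. I would set $h_\AH(\tau^{k+1}) = s\tau^k$ for $k \geq 0$ and $h_\AH(1) = h_\AH(s\tau^k) = 0$, and verify the contraction identity, $h_\AH^2 = 0$, and the side conditions $\varepsilon h_\AH = 0 = h_\AH j$ by direct computation on the basis. For $\AP$, a short Leibniz-rule calculation gives $D(x^{2k}) = 0$ and $D(x^{2k+1}) = -x^{2k+2}$; accordingly I would set $h_\AP(x^{2k}) = -x^{2k-1}$ for $k \geq 1$ and $h_\AP = 0$ on $1$ and on all $x^{2k+1}$, and verify the axioms analogously. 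Both homotopies send the respective augmentation ideals to themselves.

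\emph{Contraction of $\AAA = \AP * \AH$.} Using the decomposition of $\AAA$ into $R$ and the tensor summands $\TT^n(I\AP, I\AH)$ and $\TT^n(I\AH, I\AP)$ ($n \geq 1$) recorded in the excerpt, and the fact that $h_\AP|_{I\AP}$ and $h_\AH|_{I\AH}$ are nullhomotopies of these augmentation ideals, I would define $h_\AAA$ summand by summand via the classical tensor trick: on a summand whose leftmost tensor factor is $I\AP$, set $h_\AAA = h_\AP \otimes 1 \otimes \cdots \otimes 1$; symmetrically on summands beginning with $I\AH$; and $h_\AAA = 0$ on the $R$-summand. The elementary identity $D(s \otimes 1) + (s \otimes 1)D = \mathrm{Id}_{X \otimes Y}$, valid whenever $s$ nullhomotopes $X$, gives the contraction identity on each tensor summand; the conditions $h_\AAA^2 = 0$ and $h_\AAA j = 0 = \varepsilon h_\AAA$ descend from the single-factor case. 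The maps $j$ and $\varepsilon$ are the unit and augmentation, hence algebra maps, so this is an algebra contraction.

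\emph{Contraction of $\AAA^x$.} The twisted differential is $D^x = D + \partial$ where $\partial = [x, -]$ is a derivation of degree $-1$ satisfying $\partial(I\AAA^n) \subseteq I\AAA^{n+1}$. Since $\varepsilon \partial = 0$, the maps $j$ and $\varepsilon$ remain algebra chain maps with respect to $D^x$. I would then apply Corollary \ref{olem} to the contraction of $\AAA$ onto $R$ produced in the previous stage, with this perturbation; the perturbed codifferential on $R$ reduces to $\varepsilon \partial (1+h_\AAA\partial)^{-1} j = 0$, so the output is exactly \eqref{AAcx}. The substantive issue is invertibility of $\AAA + h_\AAA \partial$ and $\AAA + \partial h_\AAA$: since $\AAA$ is not degree-wise bounded below, the naive convergence argument of the remark following Corollary \ref{pseudolem} does not apply directly. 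I would handle this by passing to the $I\AAA$-adic completion, where the strict filtration increase of $\partial$ makes $h_\AAA \partial$ and $\partial h_\AAA$ topologically nilpotent and the geometric series $\sum_n (-h_\AAA \partial)^n$, $\sum_n (-\partial h_\AAA)^n$ convergent, yielding the required inverses. This convergence step is the main obstacle; the remaining verifications are routine bookkeeping on bases and tensor summands.
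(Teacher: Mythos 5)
Your first two stages are fine: the basis-level homotopies on $\AH$ and $\AP$ (which are essentially forced by degree reasons and $h^2=0$) and the left-factor tensor trick on the free-product decomposition do yield the algebra contraction \eqref{AAc}. The paper offers no details to compare against --- its proof consists of the remark that the assertion is straightforward --- but these two stages are exactly the routine part.

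Stage 3 contains a genuine gap, and your own computations already contain the seeds of it. First, the claimed topological nilpotence in the $I\AAA$-adic completion is false: $h_\AP(x^{2k})=-x^{2k-1}$ strictly \emph{lowers} the $I$-adic filtration, so $h_\AAA\partial$ with $\partial=[x,\,\cdot\,]$ only preserves it. Concretely, $\partial(x)=[x,x]=2x^2$ and $h_\AAA(2x^2)=-2x$, so $(h_\AAA\partial)^n(x)=(-2)^n x$: the geometric series $\sum_n(-h_\AAA\partial)^n$ diverges on $x$ in the $I$-adic topology over a general ground ring. Second, and fatally, with your left-factor homotopy the invertibility hypothesis of Corollary \ref{olem} fails outright, not just its verification by Neumann series: since $|xs|=0$ one has $\partial(xs)=x^2s-xsx$, and $h_\AAA(x^2s)=h_\AP(x^2)\otimes s=-xs$ while $h_\AAA(xsx)=0$ because $h_\AP(x)=0$; hence $(1+h_\AAA\partial)(xs)=0$, so $1+h_\AAA\partial$ is not injective (and by the identities \eqref{insp1}--\eqref{insp2} the failure propagates to $1+\partial h_\AAA$). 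The symmetric right-factor trick fails the same way on $sx$. So no completion argument can rescue your construction: the perturbation lemma simply does not apply to the contraction $h_\AAA$ you built, and obtaining \eqref{AAcx} requires either a differently adapted contracting homotopy on $\AAA$ or a direct construction. Note also a secondary defect: even if convergence held after $I$-adic completion, the completion is a different algebra, whereas \eqref{AAcx} asserts a contraction of $\AAA^x$ itself, so you would still owe an argument that the perturbed operators restrict.
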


\begin{proof}
This is straightforward. We leave the details 
to the reader.
\end{proof}

\begin{rema}
{\rm An obvious question is whether the contracting homotopy
$h_\AAA$ in \eqref{AAc} extends to a contracting homotopy for 
the pseudo Maurer-Cartan perturbation algebra
$\widehat \AAA$.

}
\end{rema}

\section*{Acknowledgement}

I am indebted to Jim Stasheff for a number of
most valuable comments on a draft of the paper.
I gratefully acknowledge support by the CNRS and by the
Labex CEMPI (ANR-11-LABX-0007-01).

\bibliographystyle{plain}
\def\cprime{$'$} \def\cprime{$'$} \def\cprime{$'$} \def\cprime{$'$}
  \def\cprime{$'$} \def\cprime{$'$} \def\cprime{$'$} \def\cprime{$'$}
  \def\dbar{\leavevmode\hbox to 0pt{\hskip.2ex \accent"16\hss}d}
  \def\cprime{$'$} \def\cprime{$'$} \def\cprime{$'$} \def\cprime{$'$}
  \def\cprime{$'$} \def\Dbar{\leavevmode\lower.6ex\hbox to 0pt{\hskip-.23ex
  \accent"16\hss}D} \def\cftil#1{\ifmmode\setbox7\hbox{$\accent"5E#1$}\else
  \setbox7\hbox{\accent"5E#1}\penalty 10000\relax\fi\raise 1\ht7
  \hbox{\lower1.15ex\hbox to 1\wd7{\hss\accent"7E\hss}}\penalty 10000
  \hskip-1\wd7\penalty 10000\box7}
  \def\cfudot#1{\ifmmode\setbox7\hbox{$\accent"5E#1$}\else
  \setbox7\hbox{\accent"5E#1}\penalty 10000\relax\fi\raise 1\ht7
  \hbox{\raise.1ex\hbox to 1\wd7{\hss.\hss}}\penalty 10000 \hskip-1\wd7\penalty
  10000\box7} \def\polhk#1{\setbox0=\hbox{#1}{\ooalign{\hidewidth
  \lower1.5ex\hbox{`}\hidewidth\crcr\unhbox0}}}
  \def\polhk#1{\setbox0=\hbox{#1}{\ooalign{\hidewidth
  \lower1.5ex\hbox{`}\hidewidth\crcr\unhbox0}}}
  \def\polhk#1{\setbox0=\hbox{#1}{\ooalign{\hidewidth
  \lower1.5ex\hbox{`}\hidewidth\crcr\unhbox0}}}


\begin{thebibliography}{10}

\bibitem{MR1057939}
Donald~W. Barnes and Larry~A. Lambe.
\newblock A fixed point approach to homological perturbation theory.
\newblock {\em Proc. Amer. Math. Soc.}, 112(3):881--892, 1991.

\bibitem{MR1802006}
Donald~W. Barnes and Larry~A. Lambe.
\newblock Correction to: ``{A} fixed point approach to homological perturbation
  theory'' [{P}roc. {A}mer. {M}ath. {S}oc. {\bf 112} (1991), no. 3, 881--892;
  {MR}1057939 (91j:55019)].
\newblock {\em Proc. Amer. Math. Soc.}, 129(3):941, 2001.

\bibitem{MR3276839}
Alexander Berglund.
\newblock Homological perturbation theory for algebras over operads.
\newblock {\em Algebr. Geom. Topol.}, 14(5):2511--2548, 2014.

\bibitem{MR0220273}
Ronald Brown.
\newblock The twisted {E}ilenberg-{Z}ilber theorem.
\newblock In {\em Simposio di {T}opologia ({M}essina, 1964)}, pages 33--37.
  Edizioni Oderisi, Gubbio, 1965.

\bibitem{chuanglazarev}
Joseph Chuang and Andrey Lazarev.
\newblock On the perturbation algebra.
\newblock {\em J. Algebra}, 519:130--148, 2019.
\newblock \url{https://arxiv.org/abs/1703.05296}.

\bibitem{MR0056295}
Samuel Eilenberg and Saunders Mac~Lane.
\newblock On the groups of {$H(\Pi,n)$}. {I}.
\newblock {\em Ann. of Math. (2)}, 58:55--106, 1953.

\bibitem{MR0065162}
Samuel Eilenberg and Saunders Mac~Lane.
\newblock On the groups {$H(\Pi,n)$}. {II}. {M}ethods of computation.
\newblock {\em Ann. of Math. (2)}, 60:49--139, 1954.

\bibitem{MR0301736}
Victor K. A.~M. Gugenheim.
\newblock On the chain-complex of a fibration.
\newblock {\em Illinois J. Math.}, 16:398--414, 1972.

\bibitem{MR662761}
Victor K. A.~M. Gugenheim.
\newblock On a perturbation theory for the homology of the loop-space.
\newblock {\em J. Pure Appl. Algebra}, 25(2):197--205, 1982.

\bibitem{MR0346025}
Peter~John Hilton and Urs Stammbach.
\newblock {\em A course in homological algebra}.
\newblock Springer-Verlag, New York, 1971.
\newblock Graduate Texts in Mathematics, Vol. 4.

\bibitem{MR1710565}
Johannes Huebschmann.
\newblock Berika{\v s}vili's functor {$D$} and the deformation equation.
\newblock {\em Proc. A. Razmadze Math. Inst.}, 119:59--72, 1999.
\newblock \url{https://arxiv.org/abs/math/9906032}.

\bibitem{MR2640649}
Johannes Huebschmann.
\newblock On the construction of {$A_\infty$}-structures.
\newblock {\em Georgian Math. J.}, 17(1):161--202, 2010.
\newblock \url{https://arxiv.org/abs/0809.4791}.

\bibitem{MR2762544}
Johannes Huebschmann.
\newblock The {L}ie algebra perturbation lemma.
\newblock In {\em Higher structures in geometry and physics}, volume 287 of
  {\em Progr. Math.}, pages 159--179. Birkh\"auser/Springer, New York, 2011.
\newblock \url{https://arxiv.org/abs/0708.3977}.

\bibitem{MR2762538}
Johannes Huebschmann.
\newblock Origins and breadth of the theory of higher homotopies.
\newblock In {\em Higher structures in geometry and physics}, volume 287 of
  {\em Progr. Math.}, pages 25--38. Birkh\"auser/Springer, New York, 2011.
\newblock \url{https://arxiv.org/abs/0710.2645}.

\bibitem{MR2820385}
Johannes Huebschmann.
\newblock The sh-{L}ie algebra perturbation lemma.
\newblock {\em Forum Math.}, 23(4):669--691, 2011.
\newblock \url{https://arxiv.org/abs/0710.2070}.

\bibitem{MR3584886}
Johannes Huebschmann.
\newblock Multi derivation {M}aurer--{C}artan algebras and sh {L}ie--{R}inehart
  algebras.
\newblock {\em J. Algebra}, 472:437--479, 2017.
\newblock \url{https://arxiv.org/abs/1303.4665}.

\bibitem{MR3881491}
Johannes Huebschmann.
\newblock The formal {K}uranishi parameterization via the universal homological
  perturbation theory solution of the deformation equation.
\newblock {\em Georgian Math. J.}, 25(4):529--544, 2018.
\newblock \url{https://arxiv.org/abs/1806.03225}.

\bibitem{MR1109665}
Johannes Huebschmann and Tornike Kadeishvili.
\newblock Small models for chain algebras.
\newblock {\em Math. Z.}, 207(2):245--280, 1991.

\bibitem{MR1932522}
Johannes Huebschmann and James~D. Stasheff.
\newblock Formal solution of the master equation via {HPT} and deformation
  theory.
\newblock {\em Forum Math.}, 14(6):847--868, 2002.
\newblock \url{https://arxiv.org/abs/math/9906036}.

\bibitem{MR0224678}
Thomas~W. Hungerford.
\newblock The free product of algebras.
\newblock {\em Illinois J. Math.}, 12:312--324, 1968.

\bibitem{MR0365571}
Dale Husemoller, John~C. Moore, and James Stasheff.
\newblock Differential homological algebra and homogeneous spaces.
\newblock {\em J. Pure Appl. Algebra}, 5:113--185, 1974.

\bibitem{MR0112157}
Kunihiko Kodaira, Louis Nirenberg, and Donald~C. Spencer.
\newblock On the existence of deformations of complex analytic structures.
\newblock {\em Ann. of Math. (2)}, 68:450--459, 1958.

\bibitem{MR0112154}
Kunihiko Kodaira and Donald~C. Spencer.
\newblock On deformations of complex analytic structures. {I}, {II}.
\newblock {\em Ann. of Math. (2)}, 67:328--466, 1958.

\bibitem{MR0195995}
Albert Nijenhuis and Roger~W. Richardson, Jr.
\newblock Cohomology and deformations in graded {L}ie algebras.
\newblock {\em Bull. Amer. Math. Soc.}, 72:1--29, 1966.

\bibitem{schlstas}
Michael Schlessinger and James~D. Stasheff.
\newblock Deformation theory and rational homotopy type.
\newblock {\em {\rm Preprint, new version}}, 2012.
\newblock \url{https://arxiv.org/abs/1211.1647}.

\bibitem{MR517083}
James~D. Stasheff.
\newblock Rational homotopy-obstruction and perturbation theory.
\newblock In {\em Algebraic topology ({P}roc. {C}onf., {U}niv. {B}ritish
  {C}olumbia, {V}ancouver, {B}.{C}., 1977)}, volume 673 of {\em Lecture Notes
  in Math.}, pages 7--31. Springer, Berlin, 1978.

\bibitem{MR1425752}
Willem~T. van Est.
\newblock Alg\`ebres de {M}aurer-{C}artan et holonomie.
\newblock {\em Ann. Fac. Sci. Toulouse Math.}, S\'erie 5(suppl.):93--134, 1989.

\end{thebibliography}

\end{document}